\documentclass[12 pt]{article}
\makeatletter
\@addtoreset{section}{part}
\usepackage{amsmath,amssymb,amsthm,setspace,mathrsfs,pifont,amsfonts}
\usepackage{hyperref, hyperref, hyperref, hyperref}
\usepackage{graphicx}
\usepackage[font=small, labelfont=bf]{caption}
\usepackage{geometry}
\usepackage{tikz}
\usetikzlibrary{matrix, graphs, graphs.standard}

\newtheorem{theorem}{Theorem}[section]
\newtheorem{corollary}{Corollary}[theorem]

\newtheorem{lemma}[theorem]{Lemma}
\newtheorem{defn}{Definition}[section]
\theoremstyle{remark}

\geometry{left = .75 in}
\setlength{\parindent}{1cm}
\def\multichoose#1#2{\ensuremath{\left(\kern-.3em\left(\genfrac{}{}{0pt}{}{#1}{#2}\right)\kern-.3em\right)}}

\title{Classification of Minimal Separating Sets of Low Genus Surfaces\footnote{This work was made possible in part thanks to Research Computing at Portland State University and its HPC resources acquired through NSF grant numbers 2019216 and 1624776.
}}
\author{Christopher N. Aagaard, J.J.P. Veerman}

\begin{document}

\maketitle
\section{Introduction}
Given a metric space $(X,d)$ and two distinct points $p,q \in X$, the \textit{mediatrix of $p$ and $q$}, which we'll denote $L_{pq}$, is
\[L_{pq} := \{x \in X | d(p,x) = d(q,x)\}.\]
In other work it is also known as an equidistant set, bisector, ambiguous locus, midset, medial axis, or central set (\cite{fox},\cite{ponce}).  In Brillouin spaces, a large class of metric spaces which includes all Riemannian manifolds, it was shown in \cite{veerman} that all mediatrices have the following minimal separating property:
\begin{defn} A subset $M \subset X$ of a topological space $X$ is called a separating set if $X\setminus M$ is disconnected.  $M$ is \textit{minimal separating} in $X$ if $M$ is a separating set for $X$ and for
any $M'\subsetneq M$, $X\setminus M'$ is connected.\end{defn}

Additionally, it is shown in \cite{bernhard} that when $X$ is a compact Riemann surface then $L_{p,q}$ is a finite topological graph.  Then, in \cite{fox}, this result is generalized to show that for any compact 2-dimensional
Alexandrov space $X$ and any $\{p,q\}\subset X$, $L_{p,q}$ is a finite topological graph.  That leads us to investigate which topological graphs can be realized as minimal
separating sets in surface of genus $g$.  (From here on unless otherwise specified, surface is taken to mean a connected compact oriented surface without boundary).

In this paper, we will provide a classification of precisely when a graph embedded in a surface is a minimal separating set and then use that result to classify the topological graphs which can
be realized as minimal separating sets in a surface of genus $g$.  We then use this classification to write a computer program which determines the number of homeomorphism classes of minimal separating sets in surfaces of genus at most 5.  As seen in Table \ref{tab:results}, the number of such homeomorphism classes for genus 0, 1, 2, 3, 4, and 5 are respectively 1, 5, 26, 217, 3555, and 325734.

In section 2 we define useful terminology, and establish a simple but useful lemma.  Then in section 3, we prove a lemma relating minimal separating embeddings with more commonly studied cellular embeddings
 and then use this to establish a relationship between general (potentially disconnected) minimal separating sets in a surface and connected minimal separating sets, which are more easily compute.
 Section 4 introduces the language of topological and combinatorial maps to describe embeddings of connected graphs, and then more general objects called hypermaps, which we can use to speed up our
 computation. Section 5
describes an algorithm to find all homeomorphism classes of graphs which embed as minimal separating sets in a surface and has computational results from using that algorithm to find all graphs that embed as minimal separating sets in surfaces of genus at most 5.

Reframing the problem in terms of combinatorial maps presents an interesting connection to existing work in map enumeration, particularly the enumeration of unrooted maps discussed in \cite{liskovets}, \cite{mednykh_enumeration_2006}, and \cite{mednykh_enumeration_2010} which we discuss in the concluding remarks.  We require an actual \textit{list} of maps in order to determine numbers of
homeomorphism classes of minimal separating sets for a given genus, which makes the purely enumerative methods in those papers insufficient here, but the particular set of combinatorial maps we consider
presents some difficulties in applying their methods and provides a set to consider for map enumeration problems.

\section{Background and Definitions}
Throughout this paper we will use the notation $X_g$ to denote the compact surface (without boundary) of genus $g$.

As observed in \cite{bernhard}, if a graph $G$ embeds as a minimal separating set in $X_g$, then it also embeds as a minimal separating set in any surface of genus greater than $g$.
Such an embedding can be constructed by taking a minimal separating embedding $\eta$ of $G$ in $S$ and adding handles within a component of $S\setminus \eta(G)$ until the resulting surface has the
desired genus.

Given this observation, we define the following three sets:
\begin{defn}\label{def:graphsets}\;\\
i) $\mathcal{M}_g$ is the set of all graphs which can be realized as minimal separating sets in $X_g$.\\
ii) $\mathcal{L}_g$ is the set of all graphs which can be realized as minimal separating sets in $X_g$, but not in $X_{g-1}$. We say they have
least separating genus $g$.\\
iii) $\mathcal{C}_g$ is the set of all connected graphs with least separating genus $g$.\footnote{$\mathcal{M}$ for \textit{minimal separating}, $\mathcal{L}$ for \textit{least separating genus}, and $\mathcal{C}$ for \textit{connected}}
\end{defn}

We restate the above observation from $\cite{bernhard}$ as $\mathcal{L}_g = \mathcal{M}_g \setminus \mathcal{M}_{g-1}$.  The relationship between $\mathcal{L}_g$ and $\mathcal{C}_g$ is more involved and we
return to it later.  For now we merely remark that it is often easier to deal with connected graphs so it will be convenient to focus on the connected case.

Much of what follows will consider graphs as combinatorial objects so we should mention a key distinction between topological graphs and combinatorial graphs which is the distinction between graph
homeomorphism and graph isomorphism.

Any topological graph $G$ with at least two vertices and a vertex $v$ of degree 2 is homeomorphic to another topological graph $\hat{G}$ where $v$ and its 2
incident edges are replaced by a single edge (by convention we require a graph has at least one vertex, so we cannot replace a vertex if it is the only vertex).  So, given a connected topological graph $G$ with no component homeomorphic to a circle, we can find a unique topological graph $\hat{G}$, homeomorphic to $G$, and with no degree 2 vertices,
which we can construct by replacing all degree 2 vertices by single edges.  Also, given any $G$ we can construct an infinite family of graphs which are homeomorphic to $G$ by repeatedly
replacing single edges with 2 edges subdivided by a vertex.  This means that every mediatrix is homeomorphic to infinitely many topological graphs, so, when we talk about topological graphs,
 we always implicitly talk about equivalence classes of graphs up to homeomorphism, and identify each equivalence class with its unique representative with no degree 2 vertices (except in the case of circles, which we identify with the graph composed of a single vertex with a loop).

For the circle, we can see immediately that this embeds as a minimal separating set any surface, and in \cite{bernhard} it's shown that any minimal separating set in a surface of genus 0 is homeomorphic to such a graph.  Thus $\mathcal{C}_0 = \mathcal{L}_0 = \mathcal{M}_0$ consists of the graph
with one vertex and one loop, and for all $g\ge 1$, we identify elements of $\mathcal{C}_g$ with their unique representative with no degree 2 vertices.


Whether a graph can be realized as a minimal separating set for a surface $X$ is a question about \textit{embeddings} of a graph, so we introduce some terminology to discuss graph embeddings.

\begin{defn}
An embedding of a (topological) graph $G$ into a surface $X$ is a continuous function $\eta: G\to X$ such that $\eta$ is a homeomorphism onto its image.  We say an embedding is a cellular embedding or
a topological map\footnote{The name `map' emphasizes that these look like maps in the sense of cartography.  The components of $X\setminus \eta(G)$ are the countries and the edges of $G$ are the borders between them.} if $X\setminus \eta(G)$ is homeomorphic to a collection of open discs.
\label{def:topmap}
\end{defn}

Given a surface $X$, a topological graph $G$ may have numerous embeddings, some of which are minimal separating sets, some of which are
non-separating, and some of which are separating but not minimal separating.  Consider the examples in Figure \ref{fig:minsep-ex}.

\begin{figure}
\begin{center}
\begin{tikzpicture}
\draw (-1.6,1.2) -- (1.6,1.2) -- (1.6, 3.2) -- (-1.6,3.2) -- (-1.6, 1.2);
\fill[color=red] (0,2.2) circle [radius=2pt];
\draw[color=red] (-.3, 1.2) -- (0,2.2) -- (-.3,3.2);
\draw[color=red] (.3, 1.2) -- (0,2.2) -- (.3, 3.2);
\draw (0,0) ellipse (1.6 and .9);
\begin{scope}
\path[rounded corners=24pt] (-0.9,0) -- (0, .6) -- (.9,0) (-.9,0) -- (0, -.56) -- (.9,0);
\draw[rounded corners=28pt] (-1.1, .1) -- (0, -.6) -- (1.1,.1);
\draw[rounded corners=28pt] (-.9,0) -- (0,.6) -- (.9,0);
\fill[color=red] (0, -.6) circle [radius=2pt];
\end{scope}
\draw[color=red] (-.13, -.91) arc (-45:45:.45);
\draw[color=red] (.13, -.91) arc (-135:-225:.45);
\draw[dashed,color=red] (-.13, -.91) arc (-135:-225:.45);
\draw[dashed,color=red] (.13, -.91) arc (-45:45:.45);
\end{tikzpicture}
\begin{tikzpicture}
\draw (-1.6,1.2) -- (1.6,1.2) -- (1.6, 3.2) -- (-1.6,3.2) -- (-1.6, 1.2);
\fill[color=red] (0,2.2) circle [radius=2pt];
\draw[color=red] (-1.6,2.2) -- (1.6,2.2);
\draw[color=red] (0, 1.2) -- (0,3.2);
\draw (0,0) ellipse (1.6 and .9);
\begin{scope}
\path[rounded corners=24pt] (-0.9,0) -- (0, .6) -- (.9,0) (-.9,0) -- (0, -.56) -- (.9,0);
\draw[rounded corners=28pt] (-1.1, .1) -- (0, -.6) -- (1.1,.1);
\draw[rounded corners=28pt] (-.9,0) -- (0,.6) -- (.9,0);
\fill[color=red] (0, -.6) circle [radius=2pt];
\end{scope}
\draw[color=red] (-.13, -.91) arc (-45:45:.45);
\draw[dashed,color=red] (-.13, -.91) arc (-135:-225:.45);
\draw[color=red] (0,0) ellipse (1.3 and .6);
\end{tikzpicture}
\begin{tikzpicture}
\draw (-1.6,1.2) -- (1.6,1.2) -- (1.6, 3.2) -- (-1.6,3.2) -- (-1.6, 1.2);
\fill[color=red] (0,2.2) circle [radius=2pt];
\draw[color=red] (-.32,2.2) circle [radius=9pt];
\draw[color=red] (.32,2.2) circle [radius=9pt];
\draw (0,0) ellipse (1.6 and .9);
\begin{scope}
\path[rounded corners=24pt] (-0.9,0) -- (0, .6) -- (.9,0) (-.9,0) -- (0, -.56) -- (.9,0);
\draw[rounded corners=28pt] (-1.1, .1) -- (0, -.6) -- (1.1,.1);
\draw[rounded corners=28pt] (-.9,0) -- (0,.6) -- (.9,0);
\fill[color=red] (0, -.6) circle [radius=2pt];
\end{scope}
\draw[color=red] (-.2,-.6) circle [radius=6pt];
\draw[color=red] (.2, -.6) circle [radius=6pt];
\end{tikzpicture}
\caption{Three embeddings of the figure 8 in the torus.  Each is shown drawn on the standard square torus with sides identified (upper) and on the torus as a donut.  The left is minimal separating and the other two are not.}\label{fig:minsep-ex}
\end{center}
\end{figure}
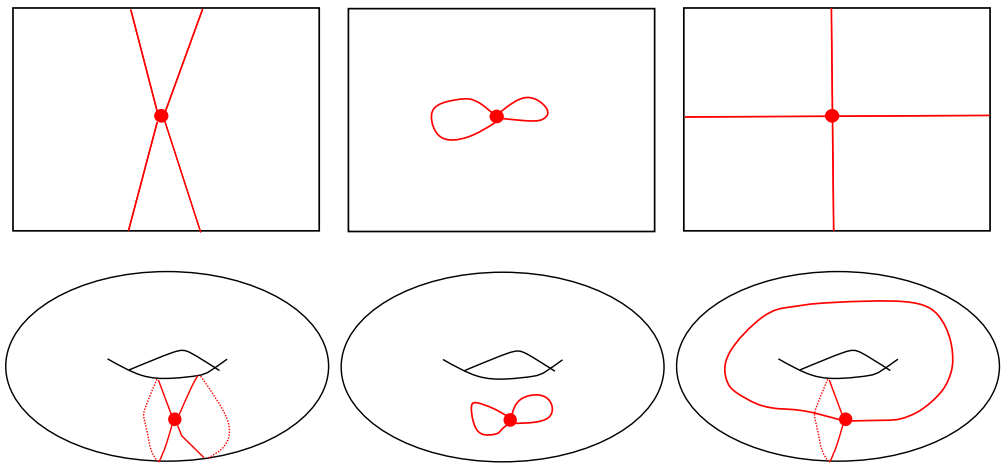

Because of this, we aim first to classify which graph embeddings are minimal separating in a given surface. Then, a topological graph will be realizable as a minimal separating set in a surface $X$ 
if it has an embedding which is minimal separating in $X$. Obtaining an actual list of such graphs can then be done as follows:  Use the classification to obtain a complete list of minimal separating embeddings (up to an appropriate equivalence relation), then construct a list of minimal separating sets up to graph homeomorphism by testing the graphs underlying the embeddings for graph 
isomorphism on a computer.  First we must choose the appropriate equivalence relation.  The following definition of equivalent embeddings is a good place to start.

\begin{defn}\label{def:equiv_emb}  Let $G$ be a topological graph, $X$ be a surface, and $\eta: G\to X$, $\psi:G\to X$ be embeddings of $G$ into $X$.  We say $\eta$ and $\psi$ are equivalent
 embeddings if there is a homeomorphism $f: X\to X$ such that $\psi = f \circ \eta$.  We say that two minimal separating sets $M_1, M_2 \subset X$ are equivalent minimal separating  sets
 if they can be realized as equivalent embeddings of the same topological graph.\end{defn}
From now on, we'll freely use the phrase \textit{minimal separating set} in place of the more precise but tediously long \textit{class of minimal separating sets which are equivalent embeddings}.  
This equivalence relation is close to what we desire, but contains ``redundant'' elements and are a bit more complex to enter in a computer than we would like.  The redundant elements arise from the 
previously mentioned observation from \cite{bernhard}, that given a minimal separating set $M$ in a surface $X_g$, we can construct a related minimal separating set in any surface of genus 
greater than $g$ by adding handles within the connected components of $X_g \setminus M$.  Within such a family of minimal separating sets, we really just want to find the one in the surface of lowest 
genus, and doing so will allow us to omit information about the component genera from our computer program.

To make this precise we introduce the following object associated to a graph embedding:
\begin{defn}\label{def:ribbon}
Let $\eta:G \to X$ be an embedding of a graph $G$ into a surface $X$.
\begin{itemize}
\item The \textit{ribbon graph} $R(\eta)$ is a closed neighborhood of $\eta(G)$ which is small enough to retract to $\eta(G)$.
\item We define $X_{\eta}$ to be the (possibly disconnected) surface obtained by gluing discs to each boundary component of $R(\eta)$ (note that if $\eta$ is a cellular embedding then $X_{\eta}$ is 
homeomorphic to $X$).  We call these discs the \textit{faces} of $R(\eta)$.
\item The genus $g_R$ of $R(\eta)$ is defined to be the genus of $X_{\eta}$ and is given by the formula
\[g_R = \frac{2-V+E-F}{2}\]
where $V$ and $E$ are the numbers of vertices and edges of $G$, and $F$ is the number of faces of $R(\eta)$.
\end{itemize}
\end{defn}
Considering $R(\eta)$, rather than $\eta(G)$, has the following additional benefit for us:  We're eventually going to want to make use of the classification of compact surfaces with boundary.  Given an 
embedding $\eta:G\to X$, $X\setminus \eta(G)$ is not compact.  Considering the union of a component of $X\setminus \eta(G)$ with $\eta(G)$ we do obtain a compact space, but is typically not a surface 
(because of the vertices of degree greater than 2 that $G$ may contain).  However, if we consider $X\setminus R(\eta)^{\circ}$ (where $S^{\circ}$ denotes the interior of a set $S$) each component of this 
space is a compact surface with boundary and there are exactly as many components as there are in $X\setminus\eta(G)$.

Now we are ready to define the final family of sets which we will be enumerating along the way to finding $\mathcal{M}_g,\mathcal{L}_g,$ and $\mathcal{C}_g$.
\begin{defn}\label{def:rg}
$\mathcal{R}_g$ is the set of \textit{connected} ribbon graphs (up to homeomorphism) which can be realized as the ribbon graph of a minimal separating set in $X_g$, but not in $X_{g-1}$.
\end{defn}

Now we relate $\mathcal{R}_g$ to $\mathcal{C}_g$:
\begin{lemma}\label{lem:rgcg}
A topological graph $G$ is an element of $\mathcal{C}_g$ if and only if there is an embedding $\eta:G\to X_g$ with $R(\eta)\in\mathcal{R}_g$, but for any $\hat{g}<g$ there is no embedding $\psi:G\to 
X_{\hat{g}}$ with $R(\psi)\in \mathcal{R}_{\hat{g}}$.
\end{lemma}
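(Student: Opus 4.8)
The plan is to reduce the biconditional to a single clean identity relating a \emph{least separating genus} of the graph $G$ to those of the ribbon graphs having $G$ as their spine, and then to chase definitions. Throughout I would lean on two structural facts. First, a ribbon graph $R$ retracts onto its spine (the graph $\eta(G)$ it is built around), so that any minimal separating set whose ribbon graph is homeomorphic to $R$ is itself a minimal separating embedding of that same spine; in particular the spine of $R(\eta)$ is $G$, and connectedness of $R(\eta)$ is equivalent to connectedness of $G$. Second, the handle-adding construction of \cite{bernhard} refines from graphs to ribbon graphs: inserting a handle in the interior of a complementary component of $X_g\setminus R(\eta)^{\circ}$ raises the genus by one while leaving $R(\eta)$ and the number of complementary components unchanged, hence preserving minimal separability. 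Consequently the set of genera in which a fixed connected ribbon graph $R$ can be realized as a minimal separating set is an up-set $\{g',g'+1,\dots\}$, and $R\in\mathcal{R}_{g'}$ for this unique minimum, which I will write $\mathrm{lsg}(R)$. The analogous statement for graphs is exactly the restated observation $\mathcal{L}_g=\mathcal{M}_g\setminus\mathcal{M}_{g-1}$, so each $G$ has a well-defined $\mathrm{lsg}(G)$ with $G\in\mathcal{L}_{\mathrm{lsg}(G)}$.

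With these in hand, the key identity I would establish is
\[ \mathrm{lsg}(G)\;=\;\min\{\,\mathrm{lsg}(R) : R \text{ is a connected ribbon graph with spine } G\,\}. \]
The inequality $\mathrm{lsg}(G)\le\mathrm{lsg}(R)$ for each such $R$ holds because a minimal separating realization of $R$ in $X_{\mathrm{lsg}(R)}$ is, by the first structural fact, a minimal separating embedding of $G$, forcing $G\in\mathcal{M}_{\mathrm{lsg}(R)}$. The reverse inequality holds because a minimal separating embedding $\eta$ of $G$ in $X_{\mathrm{lsg}(G)}$ has a ribbon graph $R(\eta)$ with spine $G$ that is thereby realizable as a minimal separating set in $X_{\mathrm{lsg}(G)}$, so the minimum on the right is at most $\mathrm{lsg}(R(\eta))\le\mathrm{lsg}(G)$. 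I would then note that $G\in\mathcal{C}_g$ is, by Definition \ref{def:graphsets}, precisely the assertion that $G$ is connected with $\mathrm{lsg}(G)=g$, while $R\in\mathcal{R}_g$ is precisely $\mathrm{lsg}(R)=g$.

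It then remains to translate the two displayed conditions into statements about $\mathrm{lsg}$. Existence of $\eta\colon G\to X_g$ with $R(\eta)\in\mathcal{R}_g$ says exactly that some connected ribbon graph with spine $G$ attains $\mathrm{lsg}(R)=g$ (the minimal separating realization in $X_g$ supplies the embedding), which by the identity means $\mathrm{lsg}(G)\le g$ with the minimum attained at level $g$. The condition that for every $\hat g<g$ no $\psi\colon G\to X_{\hat g}$ has $R(\psi)\in\mathcal{R}_{\hat g}$ says exactly that no connected ribbon graph with spine $G$ has $\mathrm{lsg}(R)<g$, i.e. $\mathrm{lsg}(G)\ge g$. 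Conjoining the two yields $\mathrm{lsg}(G)=g$, which with connectedness of $G$ is $G\in\mathcal{C}_g$; reading the equivalences in reverse, $\mathrm{lsg}(G)=g$ makes the minimum in the identity equal to $g$ and attained, producing an $\eta$ as in the first condition and precluding any $\psi$ as in the second.

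The step I expect to require the most care is the second structural fact and its consequence that $\mathcal{R}_g$ is governed by a single least genus: one must verify that inserting a handle in the interior of a complementary component leaves the homeomorphism type of $R(\eta)$ and the count of complementary components untouched, so that minimal separability persists and the genus rises by exactly one. Granting this refinement of the cited observation, together with the fact that a minimal separating realization of a ribbon graph is \emph{literally} a minimal separating embedding of its spine, the rest is the definition-chase above; the only real content lies in legitimizing the passage between $\mathcal{R}_g$ and $\mathcal{C}_g$ through ribbon graphs rather than through $G$ directly.
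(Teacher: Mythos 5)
Your proof is correct and follows essentially the same route as the paper's: both arguments are a definition chase built on the two facts that a ribbon graph retracts onto its (homeomorphism-unique) spine, so a minimal separating realization of a ribbon graph with spine $G$ yields a minimal separating embedding of $G$, and conversely that the ribbon graph of a minimal separating embedding of $G$ is a connected ribbon graph with spine $G$ realizable at that genus; your $\mathrm{lsg}$ identity merely repackages this bookkeeping. If anything, you are slightly more careful than the paper in flagging that the handle-adding monotonicity must be checked at the level of ribbon graphs so that $R\in\mathcal{R}_g$ genuinely encodes ``least realizable genus $g$'' rather than only ``realizable in $X_g$ but not $X_{g-1}$,'' a point the paper's proof uses implicitly.
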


\begin{proof}
Let $G\in \mathcal{C}_g$.  By definition of $\mathcal{C}_g$, there is an embedding $\eta:G\to X_g$ where $\eta(G)$ is a minimal separating set, and $R(\eta)$ is the ribbon graph of a minimal separating 
set in $X_g$.  The topological graph underlying a ribbon graph is unique up to homeomorphism, so $R(\eta)$ cannot be realized as a minimal separating set in $X_{\hat{g}}$ for any $\hat{g}<g$ (that would
 contradict the minimal genus condition in the definition of $\mathcal{C}_g$).  Thus $R(\eta)\in \mathcal{R}_g$.  Similarly, if there was an embedding $\psi:G\to X_{\hat{g}}$ with $\hat{g}<g$ and $R(\psi) 
 \in \mathcal{R}_{\hat{g}}$, we could recover from $R(\psi)$ a minimal separating embedding of $G$ into $X_{\hat{g}}$, contradicting the minimal genus condition on $\mathcal{C}_g$.

Now suppose that we have a surface $X$ and embedding $\eta:G\to X$ with $R(\eta)\in\mathcal{R}_g$, but there is no surface $Y$ and embedding $\psi:G\to Y$ with $R(\psi)\in\mathcal{R}_{\hat{g}}$ for any 
$\hat{g}<g$.  Since $R(\eta)\in\mathcal{R}_g$, we know that $G$ has a minimal separating embedding in $X_g$, and thus $G\in \mathcal{M}_g$.  Since $G$ is connected, $G\in\mathcal{C}_{\bar{g}}$ for 
some $\bar{g}\le g$.  If $\bar{g}<g$, that would mean it would have an embedding $\psi:G\to Y$ with $\psi(G)$ minimal separating in $X_{\bar{g}}$, which would mean $R(\psi) \in \mathcal{R}_{\hat{g}}$ for 
some $\hat{g}\le \bar{g}<g$, giving a contradiction.
\end{proof}

Since the underlyling surface of an embedding is oriented, given an edge $e$ of $G$, it makes sense to speak of the two sides of $\eta(e)$.  Thinking of starting at a point in $\eta(e)$ and
walking along $\eta(e)$ towards one of the two endpoints of $\eta(e)$, the two sides are the left and right of the walker.

We can now immediately state the following result about when a graph embedding in a surface is minimal separating:

\begin{lemma} \label{lem:twocoloring} An embedding $\eta(G)$ of a graph $G$ in a surface $X$ is \textit{minimal separating} in $X$ if and only if $X\setminus \eta(G)$ has two connected
components, $A$ and $B$, and for every edge $e$ of $G$, $\eta(e)$ has $A$ on one side and $B$ on the other.\end{lemma}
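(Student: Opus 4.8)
The plan is to prove the two directions separately, in both cases exploiting the local model of the surface near a point of $\eta(G)$: along the interior of an edge the complement of $\eta(G)$ looks like two parallel strips, and near a vertex it looks like a disc cut into sectors by the incident edges. Almost everything reduces to understanding how the number of connected components of $X\setminus\eta(G)$ behaves when a small piece of a single edge is deleted or restored.

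For the forward direction, suppose $\eta(G)$ is minimal separating, so $X\setminus\eta(G)$ has at least two components. I would fix an edge $e$ and a small open sub-arc $\gamma$ in the interior of $\eta(e)$, away from the vertices, and set $M'=\eta(G)\setminus\gamma\subsetneq\eta(G)$. By minimality $X\setminus M'=(X\setminus\eta(G))\cup\gamma$ is connected. The point is that in a tubular neighborhood of $\gamma$ the complement of $\eta(G)$ consists of exactly two strips, lying in the components $C_L$ and $C_R$ on the two sides of $\eta(e)$; hence restoring $\gamma$ joins $C_L$ to $C_R$ and leaves every other component clopen in $X\setminus M'$. For $X\setminus M'$ to be connected there can be no other components, and (since $X\setminus\eta(G)$ is disconnected) the two sides must be distinct. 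Thus $X\setminus\eta(G)$ has exactly two components and $e$ borders both. Since there are only two components in total, running this over every edge shows each $\eta(e)$ has one fixed component $A$ on one side and the other, $B$, on the other, which is precisely the claimed characterization.

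For the converse, assume $X\setminus\eta(G)=A\sqcup B$ has two components with every edge bordered by both. As components of an open subset of a surface, $A$ and $B$ are open, so $X=A\sqcup B\sqcup\eta(G)$ with $\partial A,\partial B\subseteq\eta(G)$. The crucial observation is that in fact $\eta(G)\subseteq\overline{A}\cap\overline{B}$: for a point in the interior of an edge this is exactly the two-sidedness hypothesis, and for a vertex $v$ one passes to a disc neighborhood divided into sectors by the incident edges and notes that, since each incident edge has $A$ on one side and $B$ on the other near $v$, both $A$-sectors and $B$-sectors accumulate at $v$. (This step also shows the hypothesis forces every vertex to have degree at least two, since near a degree-one vertex the two sides of its edge lie in the same component.) It follows that $\overline{A}=A\cup\eta(G)$ and $\overline{B}=B\cup\eta(G)$ are connected.

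To conclude minimality I would take an arbitrary proper subset $M'\subsetneq\eta(G)$ and a point $x_0\in\eta(G)\setminus M'$. Since $x_0\in\overline{A}\cap\overline{B}$, both $A\cup\{x_0\}$ and $B\cup\{x_0\}$ are connected, so $A\cup B\cup\{x_0\}$ is connected; and every remaining point $y\in\eta(G)\setminus M'$ lies in $\overline{A}$, hence attaches to $A$. Therefore $X\setminus M'=A\cup B\cup(\eta(G)\setminus M')$ is connected, which is exactly the minimality condition. I expect the main obstacle to be the local analysis in the forward direction, namely rigorously justifying, via a collar or tubular neighborhood of the arc $\gamma$, that re-inserting $\gamma$ merges precisely the two adjacent components and leaves all others untouched, together with the vertex analysis in the converse, where one must handle the degenerate behaviour (degree-one, and more subtly isolated, vertices) that the edge-only hypothesis does not address directly.
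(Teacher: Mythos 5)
Your proof is correct and follows essentially the same route as the paper's: the forward direction applies minimality to the set obtained by deleting (part of) an edge and argues that restoring it can only merge the two components adjacent along its sides, forcing exactly two components with one on each side, while the converse reattaches points of $\eta(G)$ to both $A$ and $B$ through their closures. You are in fact somewhat more careful than the paper, which checks minimality only against complements of single points of edges and leaves the tubular-neighborhood and vertex-degree details (which you flag explicitly) implicit.
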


\begin{proof}
If $\eta(G)$ is minimal separating, then certainly $X\setminus \eta(G)$ has at least two components, which we call $A,B,\ldots$.  By minimality, for any edge $e$ of $G$, $(X\setminus \eta(G))\cup \eta(e)$
is not separated.  Thus $\eta(e)$ is incident to all the components of $X\setminus \eta(G)$. Thus there are exactly two components $A$ and $B$ of $X\setminus \eta(G)$ with $A$
lying to one side of $\eta(e)$ and $B$ lying to the other.

Now assume $\eta(G)$ is an embedding with $X\setminus \eta(G)$ having two disjoint components $A$ and $B$, and for all edges $e$ of $G$, $\eta(e)$ is incident to $A$ on one side and $B$
on the other.  $\eta(G)$ is clearly separating so we just need to address minimality.  Since each edge $e$ satisfies $\eta(e)$ is incident to $A$ on one side and $B$ on the other, for any point
$x \in \eta(e)$, there is a path in $A$ from any point in $A$ to $x$, and a path in $B$ from any point in $B$ to $x$.  Thus $X\setminus (\eta(G)\setminus x)$ is connected and $\eta(G)$ is
minimal separating.
\end{proof}

\section{Non-Cellular Embeddings and Disconnected Graphs}
We wish to use powerful existing tools for working with cellular embeddings of connected graphs, but in general, the graph embeddings which give us minimal separating sets are neither
connected nor cellular.  As an example, consider the left embedding from Figure \ref{fig:minsep-ex}.

If a graph embedding $\eta:G\to X_g$ is cellular, it's simple to tell from $R(\eta)$ whether $\eta(G)$ is minimal separating.  Since $X_g=X_{\eta}$ for cellular $\eta$, a cellular embedding is 
minimal separating precisely when $R(\eta)$ has two faces, and each each edge of $R(\eta)$ is incident to both faces.  In this section we will extend this simple rule to a determination of precisely 
which ribbon graphs are elements of $\mathcal{R}_g$, and we will determine the elements of the $\mathcal{L}_{g}$s in terms of the elements of the $\mathcal{C}_{g}$s.  These results will allow us to 
determine the elements of the $\mathcal{M}_{g}$s, $\mathcal{L}_{g}$s, $\mathcal{C}_{g}$s, and $\mathcal{R}_{g}$s, since Lemma \ref{lem:rgcg} gives a 
method to obtain the elements of the $\mathcal{C}_{g}$s from the $\mathcal{R}_{g}$s and Definition \ref{def:graphsets} allows us to determine the elements of the $\mathcal{M}_{g}$s from the 
$\mathcal{L}_{g}$s.

\begin{defn}\label{def:facecoloring}
Let $R$ be a ribbon graph and $n$ be a positive integer.  A \textit{proper $n$-face-coloring} of $R$ is a function that maps each face of $R$ to an element of $\{1,2,\ldots, n\}$ such that no edge of $R$ 
has both sides incident to faces of the same color.
\end{defn}

We can restate Lemma \ref{lem:twocoloring} in the language of face-colorings as:  An embedding $\eta(G)$ is minimal separating in a surface $X$ if and only if $X\setminus \eta(g)$ has two connected 
components and $R(\eta)$ can be propery 2-face-colored.

\begin{lemma}\label{lem:lgcomponents}
Let $G\in \mathcal{L}_g$, $\eta:G\to X_g$ be a minimal separating embedding of $G$ into $X_g$, and $A,B$ be the connected components of $X\setminus R(\eta)^{\circ}$.  Then $A$ and $B$ have genus 0.
\end{lemma}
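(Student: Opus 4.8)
The plan is to show that if either component had positive genus, we could rebuild the surface using genus-$0$ complementary regions and thereby realize $G$ as a minimal separating set in a surface of genus strictly less than $g$, contradicting $G\in\mathcal{L}_g$. The engine of the argument is that, by the face-coloring restatement of Lemma~\ref{lem:twocoloring}, whether an embedding is minimal separating depends only on the ribbon graph $R(\eta)$ together with a proper $2$-face-coloring of it, and not at all on the genera of the two regions we glue onto its boundary circles.

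First I would record the bookkeeping. Write $V,E$ for the vertex and edge counts of $G$, let $F=F_A+F_B$ be the number of faces of $R(\eta)$ (equivalently, boundary circles), split according to whether they lie on the $A$- or $B$-colored side, and let $\gamma_A,\gamma_B$ be the genera of $A$ and $B$. Since $X_g=R(\eta)\cup A\cup B$ is glued along the $F$ boundary circles (each a copy of $S^1$, contributing $0$ to Euler characteristic), additivity of $\chi$ gives
\[
2-2g=\chi(R(\eta))+\chi(A)+\chi(B)=(V-E)+(2-2\gamma_A-F_A)+(2-2\gamma_B-F_B).
\]
Using $g_R=\tfrac{2-V+E-F}{2}$ from Definition~\ref{def:ribbon} to eliminate $V-E$, this simplifies to
\[
g=g_R+F+\gamma_A+\gamma_B-2 .
\]
(As a check, a cellular minimal separating embedding has $F=2$ and $\gamma_A=\gamma_B=0$, recovering $g=g_R$.)

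Next I would perform the genus-reducing surgery. I keep the ribbon graph $R(\eta)$ and its $2$-face-coloring exactly as they are, but cap its boundary circles differently: glue onto the $F_A$ circles colored $A$ a single \emph{connected} genus-$0$ surface $A'$ with $F_A$ boundary components (a sphere with $F_A$ holes, which exists for every $F_A\ge 1$), and likewise a connected genus-$0$ surface $B'$ onto the $B$-colored circles, all by orientation-compatible gluings. Call the resulting closed oriented surface $X'$. The inclusion $G\hookrightarrow R(\eta)\hookrightarrow X'$ is an embedding whose ribbon neighborhood is again $R(\eta)$; its complement $X'\setminus R(\eta)^{\circ}=A'\sqcup B'$ has exactly two components and inherits the same proper $2$-face-coloring, so by Lemma~\ref{lem:twocoloring} it is minimal separating in $X'$. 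Applying the genus formula to $X'$ (same $g_R$, same $F$, now with $\gamma_{A'}=\gamma_{B'}=0$) gives genus $g'=g_R+F-2=g-\gamma_A-\gamma_B$.

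Finally I would close the loop: if $\gamma_A+\gamma_B>0$ then $g'<g$, and $G$ is a minimal separating set in $X_{g'}$, contradicting that $G$ has least separating genus $g$; hence $\gamma_A=\gamma_B=0$. The step to watch is the surgery rather than the arithmetic, so the hard part will be confirming that $X'$ is genuinely connected and that its complement still has exactly two components. Connectivity follows because replacing $A,B$ by $A',B'$ preserves which boundary circles of $R(\eta)$ are joined by each region, so the incidence pattern between the components of $R(\eta)$ and the two regions---the only data controlling connectivity of the glued surface---is unchanged; and the exact count of two components is precisely what forcing $A'$ and $B'$ to be connected guarantees. This same care also covers the case where $G$ (hence $R(\eta)$) is disconnected, which must be handled because $\mathcal{L}_g$ contains disconnected graphs.
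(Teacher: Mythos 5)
Your proof is correct and takes essentially the same route as the paper: both arguments rest on the observation that the minimal separating property depends only on $R(\eta)$ and its boundary structure, so the genera of the complementary regions can be lowered without destroying minimal separation, contradicting $G\in\mathcal{L}_g$. The paper does this by cutting a single handle in one region (dropping the genus to $g-1$), whereas you replace both regions wholesale by spheres with holes and track the genus drop with an Euler characteristic computation; that bookkeeping is sound and in fact anticipates the formula established later in Corollary~\ref{cor:facecount}.
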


\begin{proof}
Without loss of generality, assume $A$ has genus greater than 0, then it contains a handle.  We can `cut' this handle with a non-separating curve and call the resulting surface $A'$.  
Since the boundary of $A'$ is identical to that of $A$, and $A'$ is still connected, we can glue $A'$ onto $R(\eta)$ exactly as $A$ was attached to $R(\eta)$, and by gluing $B$ back onto $R(\eta)$ 
as well, we see that $R(\eta)$ retracts to a minimal separating embedding of $G$ in this new surface, but since a handle was cut, this surface has genus $g-1$, contradicting the 
assumption that $G\in \mathcal{L}_g$.
\end{proof}

The proof of Lemma \ref{lem:lgcomponents} gives the following corollary
\begin{corollary}\label{cor:rgcomponents}
Let $R$ be a connected ribbon graph.  $R\in\mathcal{R}_g$ if and only if $R$ can be realized as the ribbon graph of a minimal separating set in $X_g$ and $A$ and $B$ have genus 0 (where $A$ and $B$ are the 
connected components of $X\setminus R^{\circ}$).
\end{corollary}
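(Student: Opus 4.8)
The plan is to prove both implications by leveraging the handle-cutting construction from the proof of Lemma \ref{lem:lgcomponents}, supplemented by an Euler-characteristic count that pins down the genus of the ambient surface in terms of intrinsic data of $R$.

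For the forward implication, suppose $R \in \mathcal{R}_g$. By Definition \ref{def:rg}, $R$ is realizable as the ribbon graph of a minimal separating set in $X_g$, so one half of the claim is immediate; write $A,B$ for the two components of $X_g \setminus R^\circ$ (there are exactly two by Lemma \ref{lem:twocoloring}). If either component, say $A$, had positive genus, I would run the argument of Lemma \ref{lem:lgcomponents} verbatim: cut a handle of $A$ along a non-separating curve to obtain a connected surface $A'$ with the same boundary, reglue $A'$ and $B$ onto $R$ exactly as before, and observe that the result again exhibits $R$ as the ribbon graph of a minimal separating set, now in a surface of genus $g-1$. This contradicts the requirement that $R$ not be realizable in $X_{g-1}$, so $A$ and $B$ must both have genus $0$.

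For the converse, suppose $R$ is realizable as a minimal separating ribbon graph in $X_g$ with both components $A,B$ of genus $0$; I must show that $R$ admits no such realization below genus $g$. The key point is that the ambient genus is forced. Since $R$, $A$, and $B$ meet only along the boundary circles of $R$ (each of Euler characteristic $0$), inclusion--exclusion gives $\chi(X) = \chi(R) + \chi(A) + \chi(B)$ for any realization in a closed surface $X$. Writing $F$ for the number of faces (boundary circles) of $R$ and letting $A,B$ carry $F_A,F_B$ of them with $F_A + F_B = F$, a genus-$\gamma$ surface with $k$ boundary circles has $\chi = 2 - 2\gamma - k$, so
\[\chi(A) + \chi(B) = 4 - 2(g_A + g_B) - F,\]
which depends only on $g_A + g_B$ and the intrinsic quantity $F$, not on how the faces are split between the two colors. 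Because $\chi(R) = V - E$ and $F$ are determined by $R$ alone, every minimal separating realization of $R$ lives in a surface of genus $g_R + g_A + g_B + F - 2$, which is minimized precisely when $g_A = g_B = 0$.

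Thus the genus-$0$-component realization occupies the smallest possible ambient genus $g_R + F - 2$, which by hypothesis equals $g$; any realization in some $X_{\hat g}$ would satisfy $\hat g \ge g_R + F - 2 = g$, ruling out $\hat g < g$. Hence $R$ is realizable in $X_g$ but in no lower genus, i.e. $R \in \mathcal{R}_g$. The main obstacle is exactly this converse: the handle-cutting argument by itself only shows that the genus can be decreased \emph{when} a component is non-planar, so to conclude that planar components are genuinely optimal I need the additivity computation above, together with the observation that the combinatorics of how $A$ and $B$ attach to $R$ affect the connectivity of the components but never the resulting genus.
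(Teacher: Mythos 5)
Your proof is correct, and its forward direction is exactly the paper's argument: the paper proves this corollary in a single line by deferring to the handle-cutting construction in the proof of Lemma \ref{lem:lgcomponents}, which is precisely what you run. Where you go beyond the paper's written proof is the converse. You correctly observe that handle-cutting only shows the ambient genus can be \emph{lowered} when a component has positive genus, so one still needs to know that the genus-$0$-component realization is genuinely minimal and that its ambient genus is determined by $R$ alone. Your inclusion--exclusion computation $\chi(X)=\chi(R)+\chi(A)+\chi(B)$ (valid because the pieces meet along circles of Euler characteristic zero), yielding $g=g_R+g_A+g_B+F-2$ independently of how the $F$ boundary circles are split between $A$ and $B$, settles this and is sound. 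The paper derives the same identity only later, in Corollary \ref{cor:facecount}, by reconstructing $X$ from $X_{\eta}$ one handle at a time; your Euler-characteristic route is more self-contained and makes explicit a step the paper leaves implicit, at the cost of invoking additivity of $\chi$ in place of the explicit surgery the paper prefers. Either way, the conclusion that $g=g_R+F-2$ whenever $g_A=g_B=0$ is what forces the uniqueness of the minimal ambient genus and completes the equivalence.
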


\begin{corollary}\label{cor:facecount}
Let $R$ be a connected ribbon graph and $\eta$ an embedding with $R=R(\eta)$ and assume that the faces of $X_{\eta}$ can be properly 2-face-colored.  Then $R\in\mathcal{R}_g$ if and only if 
\[F = 2+g-g_R\]
where $F$ is the number of faces of $R$.  Since $F\ge 2$ this gives $g_R\le g$.
\end{corollary}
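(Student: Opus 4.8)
The plan is to convert the membership condition $R\in\mathcal{R}_g$ into a single Euler-characteristic identity. By Corollary \ref{cor:rgcomponents}, $R\in\mathcal{R}_g$ precisely when $R$ is the ribbon graph of a minimal separating embedding whose two complementary regions both have genus $0$, so the whole argument reduces to computing how the genus of the ambient surface depends on $g_R$, $F$, and the genera $g_A,g_B$ of the complementary regions.

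First I would set up the bookkeeping. Let $\eta:G\to X$ be a minimal separating embedding with $R=R(\eta)$, and write $A,B$ for the two components of $X\setminus R^{\circ}$ (there are exactly two by Lemma \ref{lem:twocoloring}). The proper $2$-face-coloring partitions the $F$ boundary circles of $R$ into the $F_A$ circles bounding $A$ and the $F_B$ circles bounding $B$, with $F=F_A+F_B$. Since $X=R\cup A\cup B$ with the pieces meeting exactly along the $F$ boundary circles (each of Euler characteristic $0$), inclusion--exclusion gives
\[\chi(X)=\chi(R)+\chi(A)+\chi(B).\]
Using $\chi(R)=\chi(G)=V-E$, together with $\chi(A)=2-2g_A-F_A$, $\chi(B)=2-2g_B-F_B$, $\chi(X)=2-2g$, and the relation $V-E=2-2g_R-F$ coming from Definition \ref{def:ribbon}, a short rearrangement yields the master formula
\[g=g_R+F+g_A+g_B-2.\]

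The two directions then fall out. For the forward direction, if $R\in\mathcal{R}_g$ then Corollary \ref{cor:rgcomponents} forces $g_A=g_B=0$, and the master formula collapses to $F=2+g-g_R$. For the converse, suppose $F=2+g-g_R$ and fix a proper $2$-face-coloring of $R$; I would build an explicit minimal separating embedding realizing $R$ with genus-$0$ complementary regions. Rather than capping each boundary circle of $R$ with its own disc (which produces $X_\eta$), I cap the $F_A$ circles of the first color with a single connected genus-$0$ surface (a sphere with $F_A$ holes), and likewise cap the $F_B$ circles of the second color with a sphere with $F_B$ holes. Because $R$ is connected and every boundary circle receives a cap, the resulting surface $X'$ is connected; its complement $X'\setminus R^{\circ}$ is exactly these two connected genus-$0$ pieces, and the coloring guarantees each edge has one on each side, so $R$ is minimal separating in $X'$ by Lemma \ref{lem:twocoloring}. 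Feeding $g_A=g_B=0$ into the master formula shows $X'$ has genus $F-2+g_R=g$, so Corollary \ref{cor:rgcomponents} gives $R\in\mathcal{R}_g$. Finally, $F\ge 2$ because any edge of $R$ forces both colors to appear, and substituting into $F=2+g-g_R$ gives $g_R\le g$.

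The step I expect to require the most care is the converse construction: one must verify that merging several same-colored faces into a single connected genus-$0$ surface (rather than gluing in separate discs) is legitimate and produces exactly two complementary components, since this is what simultaneously forces connectedness of the regions and pins the genus at $g$. The inclusion--exclusion computation also deserves an explicit note that the correction term $\chi(\partial R)$ vanishes because the gluing locus is a disjoint union of circles.
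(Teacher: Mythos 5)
Your proof is correct, but you reach the key identity by a different route than the paper. The paper's forward direction is constructive: it rebuilds $X_g$ from $X_\eta$ by joining the discs within each color class to a base disc with handles, observes that each of the $(|F_A|-1)+(|F_B|-1)=F-2$ handle additions raises the genus by exactly one (since cutting any such handle disconnects the piece), and reads off $g=g_R+F-2$. You instead prove a more general ``master formula'' $g=g_R+F+g_A+g_B-2$ by a Mayer--Vietoris/Euler-characteristic count over the decomposition $X=R\cup A\cup B$ glued along circles, and then specialize via Corollary \ref{cor:rgcomponents}; this is cleaner bookkeeping, avoids the handle-by-handle genus argument, and makes explicit exactly how excess genus in the complementary regions would perturb the face count. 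For the converse the two arguments essentially coincide: your single cap by a sphere with $F_A$ holes is precisely what the paper's disc-plus-handles construction produces (it even invokes the classification of compact surfaces with boundary to identify the result as such a sphere), so the verification that the result is connected, has two genus-$0$ complementary regions, and is minimal separating by Lemma \ref{lem:twocoloring} is the same in both. One small point of care: your justification of $F\ge 2$ (``any edge forces both colors'') tacitly assumes $R$ has an edge, which is harmless here since an edgeless ribbon graph cannot separate, but is worth a word.
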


\begin{proof}
Suppose $R\in \mathcal{R}_g$ and let $\eta$ be a minimal separating embedding of a graph $G$ into $X_g$ such that $R=R(\eta)$.  Let $A,B$ be the connected components of $X_g\setminus R^{\circ}$. By 
Corollary \ref{cor:rgcomponents}, $A$ and $B$ both have genus 0 and are homeomorphic to spheres with some number of discs removed (one disc removed per component of their boundary with $R$).  Knowing this 
we will reconstruct $X_g$ from $X_{\eta}$.

Partition the faces of $X_{\eta}$ into two sets $F_A$ and $F_B$ according to whether they were glued to $R(\eta)$ along its boundary with $A$ or $B$.  Now select a disc $D\in F_A$ as a `base disk' and we 
will join each other disc $D' \in F_A$ to $D$ be attaching a handle with one end in $D$ and the other in $D'$.  By connecting each face in $F_A$ to $D$ in this way, we have now connected these discs into a 
single connected surface.  Cutting any of these handles will disconnect the surface (separating the the one disc from the base disc $D$ and all the others), so we haven't added any genus.  By the 
classifcation of compact surfaces with boundary (\cite{kinsey}) the resulting surface is homeomorphic to a sphere with one boundary component for each boundary with $R(\eta)$.  That is, it is homeomorphic 
to $A$.  Repeating this procedure for $F_B$ gives a surface homeomorphic to $B$.  This means that the handle gluing process described here converts $X_{\eta}$ into a surface homeomorphic to $X_{g}$.

Consider the effect of this operation on the genus of $X_{\eta}$. We added $|F_A|-1$ handles to reconstruct $A$, and $|F_B|-1$ handles to reconstruct $B$.  Since $R$ is connected by assumption, each 
handle addition increased the genus of $X_{\eta}$ by 1, so we have
\[g = g_R + (|F_A|-1)+(|F_B|-1) = g_R + F-2\]
giving $F=2+g-g_R$ as desired.

For the other direction, assume that $R=R(\eta)$ for some $\eta$, the faces of $X_{\eta}$ can be properly 2-face-colored, and $X_{\eta}$ has $2+g-g_R$ faces.  Do exactly the procedure described above to 
connect all the faces in each color class into a single connected component by adding handles.  As shown above this results in a $X_g$, with $R$ embedded so that it contracts to a minimal separating set  
by Lemma \ref{lem:twocoloring}.  This gives $R\in\mathcal{R}_{g'}$ for some $g'$, and applying the first part of the proof gives $g=g'$.
\end{proof}

This gives a precise condition for whether a connected ribbon graph $R$ is in $\mathcal{R}_g$ for some $g$ (that it can be properly 2-face-colored), and provides a formula for the value of $g$ in terms of 
$g_R$ and the number of faces of $R$.  Next we 

We obtain the following results relating $\mathcal{L}_g$ with the sets $\mathcal{C}_{h}$ for
$h\le g$.

\begin{theorem}\label{thm:simple_union}Let $G$ be a topological graph which is the disjoint union of
graphs $G_1$ and $G_2$. Then
\[
G \in \mathcal{L}_g \quad \Longleftrightarrow \quad
G_i \in \mathcal{L}_{g_i} \textrm{ with }g= g_1+g_2 +1 \,.
\]
\end{theorem}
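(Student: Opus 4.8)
The plan is to prove the two directions by first isolating a single \emph{gluing construction} and its inverse \emph{cutting construction}, and then reading off both the genus formula and the least-genus statements (membership in $\mathcal{L}$ rather than merely $\mathcal{M}$) from them. The basic building block is the claim: if $G_1$ is minimal separating in $X_{h_1}$ and $G_2$ is minimal separating in $X_{h_2}$, then $G_1\sqcup G_2$ is minimal separating in $X_{h_1+h_2+1}$. To see this I would take the two given embeddings, with complementary components $A_1,B_1$ and $A_2,B_2$ coloured by the two colours of Lemma~\ref{lem:twocoloring}, form a connected sum by running a tube from the red region $A_1$ to the red region $A_2$ (this merges the two red regions and yields a connected surface of genus $h_1+h_2$), and then attach one further handle joining $B_1$ to $B_2$ inside the now-connected surface (raising the genus to $h_1+h_2+1$ and merging the two blue regions). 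The complement of $G_1\sqcup G_2$ then has exactly two components, one all-red and one all-blue, and every edge still has one colour on each side, so Lemma~\ref{lem:twocoloring} certifies the embedding is minimal separating. This gives $G_1\sqcup G_2\in\mathcal{M}_{h_1+h_2+1}$.

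For the forward implication I would start from a minimal separating embedding $\eta\colon G\to X_g$ with $G\in\mathcal{L}_g$, so that by Lemma~\ref{lem:lgcomponents} the two complementary components $A$ (red) and $B$ (blue) both have genus $0$. Because $\eta(G_1)$ and $\eta(G_2)$ are disjoint compact sets, the boundary circles of $A$ split into those lying in $\eta(G_1)$ and those lying in $\eta(G_2)$, and likewise for $B$; each boundary circle, being connected, belongs entirely to one of the two. Since $A$ is a connected genus-$0$ surface, I can find a simple closed curve $\gamma_A\subset A$ separating the $G_1$-boundary circles from the $G_2$-boundary circles (concretely, cap the holes of $A$ to get a sphere with marked discs, and take the boundary of a disc-neighbourhood of a tree joining the $G_1$-discs while avoiding the $G_2$-discs), and similarly a curve $\gamma_B\subset B$. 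Cutting $X_g$ along $\gamma_A$ and $\gamma_B$ and capping each of the four resulting boundary circles with a disc separates $X_g$ into two closed surfaces $Y_1\supset\eta(G_1)$ and $Y_2\supset\eta(G_2)$, in each of which the corresponding $G_i$ is minimal separating with both complementary components of genus $0$. Writing $g_i$ for the genus of $Y_i$, a single Euler-characteristic count does the arithmetic: each cut-and-cap raises $\chi$ by $2$, so
\[
(2-2g_1)+(2-2g_2)=\chi(Y_1\sqcup Y_2)=\chi(X_g)+4=(2-2g)+4,
\]
which simplifies to $g=g_1+g_2+1$.

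It remains to upgrade the memberships to $\mathcal{L}$, and to deduce the reverse implication. The cutting shows $G_i\in\mathcal{M}_{g_i}$, hence $G_i\in\mathcal{L}_{g_i'}$ for some $g_i'\le g_i$; if, say, $g_1'<g_1$, then applying the gluing construction to a genus-$g_1'$ realization of $G_1$ and the genus-$g_2$ realization of $G_2$ would place $G$ in $\mathcal{M}_{g_1'+g_2+1}$ with $g_1'+g_2+1<g$, contradicting $G\in\mathcal{L}_g$; thus $g_i'=g_i$ and $G_i\in\mathcal{L}_{g_i}$. The reverse implication then follows formally: given $G_i\in\mathcal{L}_{g_i}$, the gluing construction yields $G\in\mathcal{M}_{g_1+g_2+1}$, so $G\in\mathcal{L}_{g''}$ for a unique $g''\le g_1+g_2+1$, and feeding this $g''$ back through the already-proved forward implication forces $g''=g_1+g_2+1$ by uniqueness of the least separating genus.

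The step I expect to be the main obstacle is the forward cutting construction --- in particular, verifying that the curves $\gamma_A,\gamma_B$ exist and that cutting along them disconnects $X_g$ into exactly the two pieces $Y_1,Y_2$, with each $G_i$ landing correctly and the complementary components remaining genus $0$. The genus-$0$ hypothesis supplied by Lemma~\ref{lem:lgcomponents} is precisely what makes the separating curves available (on a positive-genus region a partition of the boundary circles need not be realizable by a single separating curve), so the argument genuinely relies on being in $\mathcal{L}_g$ rather than merely $\mathcal{M}_g$; the Euler-characteristic bookkeeping and the $\mathcal{M}$-versus-$\mathcal{L}$ interplay are then routine.
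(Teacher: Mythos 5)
Your proof is correct and follows essentially the same route as the paper's: the reverse direction is the same connected-sum-plus-handle gluing certified by Lemma \ref{lem:twocoloring}, and the forward direction cuts the two genus-zero complementary components (supplied by Lemma \ref{lem:lgcomponents}) along separating curves and caps with discs, exactly as the paper does with its curves $C_w$ and $C_b$. The only cosmetic differences are that you track the genus by an Euler-characteristic count and upgrade $G_i\in\mathcal{M}_{g_i}$ to $G_i\in\mathcal{L}_{g_i}$ by an explicit contradiction via the gluing construction, where the paper instead invokes additivity of genus under connected sum and combines the two inequalities $g\ge g_1+g_2+1$ and $g\le g_1+g_2+1$; both bookkeeping schemes are sound.
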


\begin{proof}
Let $G \in \mathcal{L}_g$ and $\eta$ be a minimal separating embedding of $G$ into a surface $X$ with
genus $g$.  We'll let $\eta_1$ and $\eta_2$ be the restriction of $\eta$ to $G_1$ and $G_2$
respectively. Consider the decomposition of $X$ into $R(\eta)=R(\eta_1)\cup R(\eta_2)$ and
the two connected components of
$X\setminus R(\eta)^{\circ}$ as shown in Figure \ref{fig:minsep-decomp}.  As in the figure, we color one
component white, labelling it $X_w$, and the other black, labelling it $X_b$.  By Lemma \ref{lem:lgcomponents}, both components are homeomorphic to spheres with one missing disc per boundary component with $R(\eta)$.  
By cutting both the black component along the red curve $C_b$, and
cutting the white one along the red curve $C_w$, and ``plugging'' the new holes by gluing on
discs, when we glue the spheres back onto
$R(\eta)$, we see that we now have two surfaces, $X^{\prime}$ and $X^{\prime\prime}$ with $R(\eta_1)$ the ribbon graph of an embedding in $X^{\prime}$ and $R(\eta_2)$ the ribbon graph of an embedding in 
$X^{\prime\prime}$.

\begin{figure}
\begin{center}
\begin{tikzpicture}
\filldraw[fill=gray, draw= black] (3,1.5) -- (3, -1.2) arc (90:-90:0.3 and .4) arc (90:-90:0.5) arc (90:-90:0.3 and .4) arc (-90:0:2 and 3.8) arc (0:90:2 and 3.5) arc (90:-90:0.3 and 1);
\draw[color=red] (-3,0) arc (360:180:1 and .4);
\draw[dashed, color=red] (-3,0) arc (0:180:1 and .4);
\draw[color=red] (3,0) arc (180:360:1 and .4);
\draw[dashed, color=red] (3,0) arc (180:0:1 and .4);
\draw (-3,1.6) ellipse (0.3 and .4);
\draw (-3, 3.4) ellipse (0.3 and .4);
\draw (-3, -1.6) ellipse (0.3 and .4);
\draw (-3, -3.4) ellipse (0.3 and .4);
\draw (3, 2.5) ellipse (0.3 and 1);
\draw (3, -1.6) ellipse (0.3 and .4);
\draw (3, -3.4) ellipse (0.3 and .4);
\draw (1, 2.5) ellipse (0.3 and 1);
\draw (3, 1.5) -- (3, -1.2);
\draw (3, 3.5) arc (90:0:2 and 3.5);
\draw (3, -3.8) arc (-90:0:2 and 3.8);
\draw (3,-3) arc (-90:90:0.5);
\draw (-1, 3.8) arc (90:270:0.3 and .4);
\draw[dashed] (-1, 3.8) arc (90:-90:0.3 and .4);
\draw (-1, 1.2) arc (270:90:0.3 and .4);
\draw[dashed] (-1,1.2) arc(-90:90:0.3 and .4);
\draw (-1, -1.2) arc (90:270:0.3 and .4);
\draw[dashed] (-1, -1.2) arc (90:-90:0.3 and .4);
\draw (-1, -3) arc (90:270:0.3 and .4);
\draw[dashed] (-1, -3) arc (90:-90:0.3 and .4);
\draw (1, -1.6) ellipse (0.3 and .4);
\draw (1, -3.4) ellipse (0.3 and .4);
\draw (-1,2) arc (-90:90:0.5);
\draw (-3, 3) arc (90:270:0.5);
\draw (-3,-2) arc (90:270:0.5);
\draw (-1, -2) -- (1,-2);
\draw (-1, -1.2) -- (1,-1.2);
\draw (-1, -3) -- (1,-3);
\draw (-1, -3.8) -- (1,-3.8);
\draw (-3, 1.2) -- (-3,-1.2);
\draw (-3, 3.8) arc (90:270:2 and 3.8);
\draw (-1,3.8) -- (1, 3.5);
\draw (-1, 1.2) -- (1, 1.5);
\draw[dashed, color = blue] (0,-3.8) arc (-90:90:0.3 and .4);
\fill[color=blue] (-0.3, -3.4) circle[radius=2pt];
\draw[color=blue] (0,-3) arc(90:270:0.3 and .4);
\fill[color=blue] (-0.3, -1.6) circle[radius=2pt];
\draw[color=blue] (0,-1.2) arc (90:270:0.3 and .4);
\draw[dashed, color=blue] (0,-1.2) arc(90:-90:0.3 and .4);
\fill[color=blue] (0,2.5) circle[radius=2pt];
\draw[color=blue] (0,3.65) arc (160:200:1.7);
\draw[color=blue] (0,2.5) arc (160:200:1.7);
\draw[color=blue] (0,2.5) -- (-.6, 2.81);
\draw[color=blue] (0,2.5) -- (-.6, 2.19);
\node[inner sep=0] (a) at (0,3.65) {};
\node[inner sep=0] (b) at (-.6, 2.8) {};
\node[inner sep=0] (c) at (-.6, 2.2) {};
\node[inner sep=0] (d) at (0, 1.35) {};
\draw[dashed] (-6,0) -- (6,0);
\path[dashed, color=blue]
(a) edge[bend left] (b)
(c) edge[bend left] (d);
\node at (0, 3.65) [label={above:\footnotesize{$R(\eta_1)$}}] {};
\node at (0, -2.0) [label={below:\footnotesize{$R(\eta_2)$}}]{};
\node at (-4.9,-.2) [label={left:\footnotesize{$C_w$}}]{};
\node at (4.9, -.2) [label={right:\footnotesize{$C_b$}}]{};
\node at (-4.5, 2.8) [label={left:\footnotesize{$X_w$}}]{};
\node at (4.5, 2.8) [label={right:\footnotesize{$X_b$}}]{};
\end{tikzpicture}\qquad
\begin{tikzpicture}
\node at (0, 3.5) [label={above:\footnotesize{$G_1$}}] {};
\node at (0, -.5) [label={above:\footnotesize{$G_2$}}] {};
\fill[color=blue] (0,2.5) circle[radius=2pt];
\draw[color=blue] (0,3) circle[radius=.5];
\draw[color=blue] (0,2) circle[radius = .5];
\draw[color=blue] (0,-1) circle[radius=.5];
\draw[color=blue] (0,-2.2) circle[radius=.5];
\fill[color=blue] (-.5,-1) circle[radius=2pt];
\fill[color=blue] (-.5, -2.2) circle[radius=2pt];
\end{tikzpicture}
\caption{Left: An example where $G_1$ is a single vertex with two attached loops, $G_2$ is a pair of vertices, each with a loop attached, and gluing the components together recovers a minimal separating embedding of
 $G= G_1\cup G_2$ in a surface $X$. $X_w$ and $X_b$ are the connected components of $X\setminus R(G)$, which have been colored white and black respectively. Right: $G_1$ and $G_2$.}\label{fig:minsep-decomp}\end{center}\end{figure}

Note that each edge of $R(\eta_1)$ in $X^{\prime}$ is still incident to the black and white components, and similarly for the edges of $R(\eta_2)$ in $X^{\prime\prime}$.  Thus the induced embeddings of 
$G_1$ and $G_2$ are minimal separating in $X^{\prime}$ and $X^{\prime\prime}$ respectively.  Then there must exist $g_1,g_2$ such that $G_1 \in \mathcal{L}_{g_1}$ and $G_2\in\mathcal{L}_{g_2}$. Futhermore,
 $X^{\prime}$ has genus at least $g_1$ and $X^{\prime\prime}$ has genus at least $g_2$.  We can recover $X$ from $X^{\prime}$ and $X^{\prime\prime}$ by taking a connected sum (glued together within the 
black component of each surface, and adding a handle to connect the white components).  Since genus is additive under connected sums, and adding a handle increase genus by one, we have:
\[g \ge 1+g_1+g_2.\]

Now we let $G_1,G_2$ be topological graphs with $G_1\in\mathcal{L}_{g_1}$ and $G_2\in\mathcal{L}_{g_2}$ and let $G$ be their disjoint union.  Let $\eta_1:G_1\to X_{g_1}$ and $\eta_2:G_2\to X_{g_2}$ be 
minimal separating embeddings.  Now we construct a minimal separating embedding of $G$ in $X_{g_1+g_2+1}$.  For both $\eta_1$ and $\eta_2$, color one of the connected components of 
$X_{g_i}\setminus R(\eta_i)^{\circ}$ black and the other white.  Now take the connected sum of $X_{g_1}$ and $X_{g_2}$ by gluing together within the black component of each surface.  Add a handle connecting 
the two white components.  We've now constructed a surface of genus $g_1+g_2+1$, and embedded $G$ into it so that each edge of $G$ is incident to the black region on one side and the white region on the 
other.  Thus we have a minimal separating embedding of $G$ into $g_1+g_2+1$, so $G \in \mathcal{L}_g$ for some $g$ satisfying
\[g\le 1+g_1+g_2.\]
Combining the inequalities gives the desired result.
\end{proof}

\begin{corollary}\label{cor:connected-components}
Let $G \in \mathcal{L}_g$ and let $G_1,\ldots, G_k$ be the connected components of $G$.  Then there exist $g_1,\ldots, g_k$ such that for each $i \in 1,2,\ldots, k$ we have $G_i \in \mathcal{C}_{g_i}$ and
 $g = (k-1)+\sum_{i=1}^{k}g_i $.
\end{corollary}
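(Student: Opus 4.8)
The plan is to prove this by induction on the number $k$ of connected components, peeling off one component at a time and invoking Theorem \ref{thm:simple_union} at each stage. The two-component case already contains the essential geometric content (the single extra handle connecting the two white regions), so the corollary should reduce to bookkeeping once the induction is set up.

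First I would dispose of the base case $k=1$. Here $G = G_1$ is itself connected, so the hypothesis $G \in \mathcal{L}_g$ together with the definition of $\mathcal{C}_g$ as the connected graphs of least separating genus $g$ immediately gives $G_1 \in \mathcal{C}_g$. Setting $g_1 = g$, the claimed identity reads $g = (1-1) + g_1 = g_1$, which holds trivially.

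For the inductive step, I would assume the statement for graphs with $k-1$ connected components and let $G$ have components $G_1,\ldots,G_k$. Split $G$ as the disjoint union $G = G' \cup G_k$, where $G' := G_1 \cup \cdots \cup G_{k-1}$. The key point is that Theorem \ref{thm:simple_union} is phrased for an arbitrary disjoint union of two topological graphs and does not require the pieces to be connected, so it applies verbatim to the pair $(G', G_k)$: it yields genera $g'$ and $g_k$ with $G' \in \mathcal{L}_{g'}$, $G_k \in \mathcal{L}_{g_k}$, and $g = g' + g_k + 1$. Since $G_k$ is connected, $G_k \in \mathcal{L}_{g_k}$ is the same statement as $G_k \in \mathcal{C}_{g_k}$. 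Applying the inductive hypothesis to $G'$ (which has $k-1$ components) produces $g_1,\ldots,g_{k-1}$ with $G_i \in \mathcal{C}_{g_i}$ for $1 \le i \le k-1$ and $g' = (k-2) + \sum_{i=1}^{k-1} g_i$. Substituting into $g = g' + g_k + 1$ gives
\[
g = (k-2) + \sum_{i=1}^{k-1} g_i + g_k + 1 = (k-1) + \sum_{i=1}^{k} g_i,
\]
completing the induction.

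I do not expect any serious obstacle here; the corollary is essentially a recursive unwinding of the two-component case. The one point that requires a little care is confirming that Theorem \ref{thm:simple_union} may legitimately be applied with a \emph{disconnected} first argument $G'$ — this is precisely why that theorem was stated for general disjoint unions rather than for two connected graphs — and keeping the $(k-1)$ and $(k-2)$ offsets straight, so that the single extra handle introduced at each of the $k-1$ splittings is counted exactly once, producing the overall additive term $(k-1)$.
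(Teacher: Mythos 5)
Your proof is correct and follows exactly the route the paper intends: the paper's proof is the single line ``follows from Theorem \ref{thm:simple_union} by induction on the number of connected components,'' and your write-up simply fills in the details of that induction, including the (valid) observation that Theorem \ref{thm:simple_union} applies when the first factor $G'$ is disconnected and that $\mathcal{L}_{g_k}$ and $\mathcal{C}_{g_k}$ coincide for connected graphs.
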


\begin{proof}
The proof follows from Theorem \ref{thm:simple_union} by induction on the number of connected components of $G$.
\end{proof}
With this result, we can give a formula for the $|\mathcal{L}_g|$ in terms of the $|\mathcal{C}_{\hat{g}}|$ for $\hat{g}\le g$.

\begin{corollary}\label{cor:lgcount}
Let $K_g = \{(k_0,\ldots, k_g) : g= (\sum_{i=0}^{g}(i+1)k_i) -1\}.$  Then
\[|\mathcal{L}_g| = \sum_{(k_0,\ldots, k_g) \in K_g} \prod_{i=0}^{g} {\lvert C_i\rvert + k_i -1 \choose k_i}\]
\end{corollary}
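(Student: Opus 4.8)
The plan is to reduce Corollary \ref{cor:lgcount} to a routine multiset (stars-and-bars) count by first setting up a bijection between $\mathcal{L}_g$ and a suitable family of multisets of connected graphs. The key observation I would start from is that a topological graph is determined, up to homeomorphism, by the multiset of homeomorphism classes of its connected components. Thus counting $\mathcal{L}_g$ is the same as counting the admissible multisets of connected graphs that arise as component decompositions of elements of $\mathcal{L}_g$.

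Next I would pin down exactly which multisets are admissible, using the two preceding results. Given $G \in \mathcal{L}_g$ with connected components $G_1,\ldots,G_k$, Corollary \ref{cor:connected-components} says each $G_j \in \mathcal{C}_{g_j}$ for some $g_j$ and that $g = (k-1) + \sum_j g_j$. Conversely, iterating Theorem \ref{thm:simple_union} shows that the disjoint union of any connected graphs $G_j \in \mathcal{C}_{g_j}$ lies in $\mathcal{L}_g$ for precisely this same $g$. Hence the map sending $G$ to the multiset of its components is a bijection from $\mathcal{L}_g$ onto the collection of finite multisets $\{G_1,\ldots,G_k\}$ of connected graphs satisfying $G_j \in \mathcal{C}_{g_j}$ and $g = (k-1) + \sum_j g_j$.

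Then I would repackage each such multiset by its \emph{type} $(k_0,\ldots,k_g)$, where $k_i$ records how many of its components lie in $\mathcal{C}_i$. With this bookkeeping $k = \sum_i k_i$ and $\sum_j g_j = \sum_i i\,k_i$, so the genus constraint becomes $g = (\sum_i k_i - 1) + \sum_i i\,k_i = \sum_{i=0}^{g}(i+1)k_i - 1$, which is exactly the condition $(k_0,\ldots,k_g) \in K_g$; note that only indices $i \le g$ can have $k_i > 0$, since a component of class $j > g$ alone forces $\sum (i+1)k_i \ge j+1 > g+1$. For a fixed type, specifying the multiset amounts to independently choosing, for each $i$, a size-$k_i$ multiset of elements of $\mathcal{C}_i$, and the number of such choices is the multiset coefficient $\binom{\lvert\mathcal{C}_i\rvert + k_i - 1}{k_i}$. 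By independence across $i$, the number of multisets of a fixed type is the product $\prod_{i=0}^{g}\binom{\lvert\mathcal{C}_i\rvert + k_i - 1}{k_i}$, and summing over all types in $K_g$ gives the asserted formula.

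The computations are routine; the only points that require care, rather than any genuine obstacle, are confirming that the component-decomposition map really is a bijection on homeomorphism classes — which rests entirely on Theorem \ref{thm:simple_union} and Corollary \ref{cor:connected-components} — and remembering that components of a given class may repeat, so the relevant count is the multiset coefficient $\binom{n+k-1}{k}$ and not $\binom{n}{k}$.
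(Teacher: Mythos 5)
Your proposal is correct and follows essentially the same route as the paper: reduce to counting multisets of connected components via Corollary \ref{cor:connected-components} (with the converse supplied by iterating Theorem \ref{thm:simple_union}), group by the type vector $(k_0,\ldots,k_g)$, and count each type with multiset coefficients. The only difference is that you make explicit two points the paper leaves implicit --- the converse direction needed for the ``precisely'' claim, and the fact that no component can lie in $\mathcal{C}_j$ for $j>g$ --- which is a welcome bit of extra care rather than a different argument.
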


\begin{proof}
From Corollary \ref{cor:connected-components} we know that the elements of $\mathcal{L}_g$ are precisely the graphs whose connected components $G_1,\ldots, G_k$ satisfy $G_i \in \mathcal{C}_{g_i}$ for 
some $g_i$ and 
\[g_1+g_2+\ldots +g_{\ell}+ (\ell-1)=g.\]
For each $i\in \{0,1,\ldots, g\}$ let $k_i = |\{G_1,\ldots, G_{\ell}\}\cap \mathcal{C}_{i}|$, the number of components of $G$ which are in $\mathcal{C}_i$.  Then we have:
\[g = \left(\sum_{i=0}^{g}i k_i\right) + (\ell -1)\]
and since each component is in exactly one of the $\mathcal{C}_i$ we have $\ell = \sum_{i=0}^{g}k_i$.  This gives
\begin{align*}
g &= \left(\sum_{i=0}^{g} i k_i\right) + \left(\sum_{i=0}^{g}k_i\right) -1\\
  &= \left(\sum_{i=0}^{g} (i+1)k_i \right) -1
\end{align*}
So, $G\in \mathcal{L}_g$ if and only if this equality holds.  Now we need to determine how many graphs have exactly $k_i$ connected components in each $\mathcal{C}_i$.

For $G\in\mathcal{L}_g$ define $K(G):=(k_0,\ldots, k_g)$ where $k_i$ is the number of connected components of $G$ in $\mathcal{C}_i$. Since two graphs $G$ and $G'$ are isomorphic if and only if there is a 
bijection between their connected components which takes distinct components of $G$ to distinct isomorphic components of $G'$, the number of graphs $G$ with $K(G) = (k_0,\ldots, k_g)$ is equal to the 
number of was to choose (with replacement) $k_0$ elements from $\mathcal{C}_0$ times the number of ways to choose $k_1$ elements from $\mathcal{C}_1$ \dots times the number of ways to choose $k_g$ 
elements from $\mathcal{C}_g$.  Since the number of ways to choose $k$ objects with replacement from a set of size $n$ is equal to ${n+k-1\choose k}$ \cite{stanley}, we have the desired formula
\[\lvert \mathcal{L}_g\rvert = \sum_{(k_0,\ldots, k_g)\in K_g}\prod_{i=0}^{g}{\lvert C_i\rvert +k_i -1 \choose k_i}.\]
\end{proof}

With these results relating the $\mathcal{R}_g$ to the $\mathcal{C}_g$, and the $\mathcal{C}_g$ to the $\mathcal{L}_g$, we can now turn our attention to a method of finding all the elements of the 
$\mathcal{R}_g$.

\section{Topological and Combinatorial Maps}
Given a ribbon graph or a cellular embedding, there is discrete data called a combinatorial map which allows us to recover the ribbon graph up to homeomorphism or the embedding up to embedding equivalence.
These objects are equivalently known as rotation systems, fat graphs, ribbon graphs\footnote{However,
we use the term \textit{ribbon graph} exclusively as in Definition \ref{def:ribbon}.}, or dessins d'enfants (\cite{hidalgo}, \cite{liu},\cite{lando}).
Precise definitions vary a bit from source to source based on the types of embeddings of interest to the author (particularly if the author is interested in non-orientable surfaces).  We use the following definition.

\begin{defn}\label{def:combinatorialmap} \cite{lando} A \emph{combinatorial map} is an ordered
triple\footnote{Or, equivalently, an ordered pair $(\sigma,\alpha)$.}
 $(\sigma,\alpha,\varphi)$\footnote{The letters $\sigma,\alpha,\varphi$ come from the French \textit{sommet} for vertex, \textit{arc} for edge and \textit{face} for face\cite{jacques}.} of permutations in $S_{2n}$ (the symmetric group on $2n$ elements), such that \\[0.15cm]
 i)\;\; $\alpha$ is an involution with no fixed points.\\[0.15cm]
ii)\;\; The permutation group $\langle \sigma, \alpha\rangle=\langle \sigma, \alpha,\varphi\rangle$ acts transitively on the set $\{1,2,\ldots, 2n\}$.\\[0.15cm]
iii)\;\; $\varphi\circ\alpha\circ\sigma=1$.
\end{defn}

\vskip 0.0in\noindent
The second condition is equivalent to the underlying graph being connected.

We give an example of the correspondence between combinatorial maps and connected cellular embeddings 
(see Definition \ref{def:topmap}). For a proof of this, see \cite{lando}. Given a connected graph $G$
and a cellular embedding of $G$ into an orientable surface $X$, the following four-step reasoning
associates a combinatorial map  to our embedding of $G$ (see Figure \ref{fig:map-ex}).
\begin{enumerate}
\item Assign labels to the edge ends and mark them on the surface $X$ as described above.
\item Define $\alpha$ as the involution that swaps the two ends of each edge. The number $c(\alpha)$ of cycles of $\alpha$ gives the number of edges.
\item Define $\sigma$ as the permutation that assigns the next (counterclockwise) edge at each vertex. The number $c(\sigma)$ of cycles of $\sigma$ gives the number of vertices.
\item Then $\varphi=(\sigma \alpha)^{-1}$ turns out to be (see below) the `boundary walk'. That is: the cycles of $\varphi$ are precisely the edges that form the boundary
of each component (or cell)  of the complement of the graph. The number $c(\varphi)$ of cycles of $\varphi$ gives the number of faces.
\end{enumerate}

\begin{figure}
\begin{center}
\begin{tikzpicture}
\foreach \a in {1,2,3}
\fill ({\a*120+30}:1.5) circle[radius=2pt];
\draw [rounded corners, thick] (150:1.5) -- (210:1.5) -- (270:1.5);
\draw [rounded corners, thick] (150:1.5) -- (210:.2) -- (270:1.5);
\draw [thick] (150:1.5) -- (30:1.5);
\draw [thick] (270:1.5) -- (30:1.5);
\draw [thick] (30:1.5).. controls (52:3.0) and (8:3.0).. (30:1.5);
\end{tikzpicture}\qquad
\begin{tikzpicture}
\foreach \a in {1,2,3}
\fill ({\a*120+30}:1.5) circle[radius=2pt];
\draw [rounded corners, thick] (150:1.5) -- (210:1.5) -- (270:1.5);
\draw [rounded corners, thick] (150:1.5) -- (210:.2) -- (270:1.5);
\draw [thick] (150:1.5) -- (30:1.5);
\draw [thick] (270:1.5) -- (30:1.5);
\draw [thick] (30:1.5).. controls (52:3.0) and (8:3.0).. (30:1.5);
\node at (125:1.2) [label={below:\footnotesize{$1$}}]{};
\node at (150:1.6) [label={below right:\footnotesize{$5$}}]{};
\node at (142:1.4) [label={below left:\footnotesize{$3$}}]{};
\node at (270:1.1) [label={left: \footnotesize{$4$}}]{};
\node at (35:1.7) [label={left:\footnotesize{$2$}}]{};
\node at (272:1.3) [label={above:\footnotesize{$6$}}]{};
\node at (265:1.3) [label={right :\footnotesize{$8$}}]{};
\node at (42:1.3) [label={below :\footnotesize{$7$}}]{};
\node at (21:1.7) [label={above :\footnotesize{$9$}}]{};
\node at (5:1.6) [label={above:\footnotesize{$10$}}]{};
\end{tikzpicture}
\caption{An embedding of a graph in the plane is shown on the left, and the same embedding equipped with a labelling of the edge ends is shown on the right.  Each label is immediately
clockwise of its corresponding edge end.  The corresponding combinatorial map is $\sigma=(1,3,5)(4,8,6)(2,7,10,9),\alpha = (1,2)(3,4)(5,6)(7,8)(9,10), \varphi = (1,6,7)(2,10,8,3)(4,5)(9)$.}
\label{fig:map-ex}
\end{center}\end{figure}
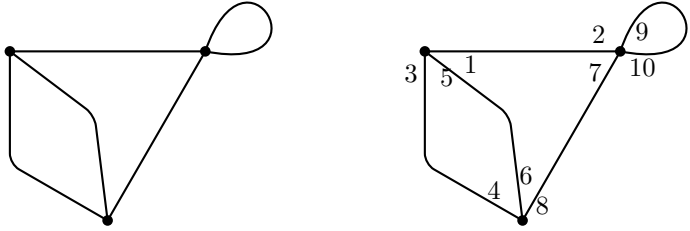

The procedure above certainly gives a way to associate a triple of permutations in $S_{2n}$ to any
connected graph with $n$ edges.  Since $G$ is connected it's possible to walk from any edge end to
any other one by travelling to the opposite end of the edge (applying $\alpha$), then moving to another
edge end incident to the current vertex (applying $\sigma$) and repeating.  This shows that
$\langle \sigma,\alpha\rangle$ acts transitively on $\{1,\ldots, 2n\}$.  It remains to verify that
$\varphi$ in Definition \ref{def:combinatorialmap} is really a boundary walk.  We illustrate
this with a simple picture looking at two vertices with an edge between them, see Figure
\ref{fig:vert-label}.

Given a combinatorial map and associated ribbon graph, we know that the cycles of the permutations defining the map correspond to vertices, edges, and faces of the ribbon graph.  This leads to the following 
convenient fact:
\begin{lemma}\label{lem:map_genus} Given a combinatorial map $(\sigma,\alpha,\varphi)$ of a cellular embedding $\eta$, with ribbon graph $R_{\eta}$, the genus $g_R$ of $X_{\eta}$ satisfies
\[2-2g_R = V-E+F=c(\sigma)-c(\alpha)+c(\varphi).\]
\end{lemma}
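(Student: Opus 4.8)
The plan is to prove the two equalities separately, as each rests on machinery already assembled. The right-hand equality, $V-E+F = c(\sigma)-c(\alpha)+c(\varphi)$, is exactly the dictionary supplied by the four-step construction that attaches the map $(\sigma,\alpha,\varphi)$ to the cellular embedding $\eta$: the cycles of $\sigma$ are the vertices, so $c(\sigma)=V$; the cycles of $\alpha$ are the edges, so $c(\alpha)=E$; and the cycles of $\varphi$ are the face boundary walks, so $c(\varphi)=F$. I would simply cite these three identifications and substitute them term by term.

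For the left-hand equality, $2-2g_R = V-E+F$, the quickest route is to observe that it is Definition \ref{def:ribbon} in disguise: $g_R$ was defined by $g_R=(2-V+E-F)/2$, and clearing the fraction and rearranging yields $2-2g_R=V-E+F$ immediately. To make this more than a formal rearrangement, I would also record why $g_R$ so defined really is the genus of a closed orientable surface. The ribbon graph $R_\eta$ is a compact orientable surface with boundary, being a neighborhood of $\eta(G)$ inside the oriented surface $X$; capping each boundary circle with a disc produces the closed orientable surface $X_\eta$, which is connected because condition (ii) makes $\langle\sigma,\alpha\rangle$ transitive. The vertices, edges, and capping discs give a CW decomposition of $X_\eta$, so its Euler characteristic is $\chi(X_\eta)=V-E+F$; since a closed orientable surface of genus $g_R$ has $\chi=2-2g_R$, the two sides agree. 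This simultaneously verifies the equality and confirms that the formula in Definition \ref{def:ribbon} computes the honest genus.

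I do not expect a genuinely hard step: the lemma is essentially bookkeeping that binds the combinatorial invariants to the topological Euler characteristic. The one point deserving care is the identification $c(\varphi)=F$, i.e.\ that $\varphi$ really walks the boundary of each face exactly once. That verification is precisely the boundary-walk discussion preceding the lemma (Figure \ref{fig:vert-label}), so I would invoke it rather than repeat it. With that in hand, the lemma reduces to the substitution above together with the standard Euler-characteristic value $2-2g_R$ of a genus-$g_R$ surface.
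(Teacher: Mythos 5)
Your proposal is correct and follows essentially the same route as the paper, which simply cites the four-point dictionary ($c(\sigma)=V$, $c(\alpha)=E$, $c(\varphi)=F$) together with the genus formula from Definition \ref{def:ribbon}; you merely spell out the Euler-characteristic bookkeeping that the paper leaves implicit.
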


\begin{proof} This follows directly from the four points listed above.
\end{proof}

Notice that given any connected graph $G$ and an embedding $\eta:G\to X$ (cellular or not) we can construct a combinatorial map with the same ribbon graph as $\eta$ by steps 1-3 of the procedure described
 above, but if the embedding isn't cellular then point 4 will not be true.  Also, just as the possible existence of degree 2 vertices allows for combinatorially distinct graphs to be homeomorphic, 
 they allow for combinatorial maps with different cycle types to be associated to the same ribbon graph.  
 So, for any embedding of a connected graph we can talk about \textit{a} combinatorial map associated to the embedding.  We'd 
 like to refer to \textit{the} combinatorial map associated to an embedding, but we must address degree 2 vertices and the fact that there are typically many ways to assign labels to the edge ends.  Degree
  2 vertices are easily handled:  For every ribbon graph which is not the annulus we only consider associating to it combinatorial maps $(\sigma,\alpha,\varphi)$ where $\sigma$ has no 2-cycles (so the 
  underlying graph has no degree 2 vertices).  

To deal with the matter of a choice of labelling, there is the notion of \textit{isomorphism} of combinatorial maps, again following \cite{lando}.  It captures the idea that two
combinatorial maps arising from different choices of labelling on the same topological map are really \textit{the same object}.  In terms of the $S_{2n}$ action on $\{1,\ldots, 2n\}$,
relabeling can be thought of as conjugation by the relabelling permutation $\rho$, and we arrive at the following definition:

\begin{defn}\label{def:map-iso}
We say $\rho \in S_{2n}$ is an isomorphism between combinatorial maps $(\sigma_1,\alpha_1,\varphi_1)$ and $(\sigma_2,\alpha_2,\varphi_2)$ if
\[\rho^{-1}\sigma_1\rho = \sigma_2,\qquad \rho^{-1}\alpha_1\rho = \alpha_2.\]
And this, of course, implies that $\rho^{-1}\varphi_1\rho = \varphi_2$.
\end{defn}

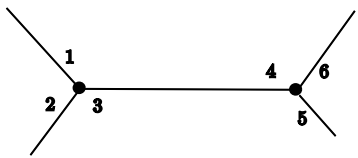
\begin{figure}
\begin{center}
\begin{tikzpicture}
\draw(-1,0) -- (1,0);
\fill (-1,0) circle [radius=2pt];
\fill (1,0) circle [radius=2pt];
\draw (-1.7,1.3) -- (-1,0);
\draw (-1.7,-1.3) -- (-1,0);
\draw (1,0) -- (1.7,1.3);
\draw (1,0) -- (1.7,-1.3);
\node at (179:1.1) [label={above:\footnotesize{$1$}}]{};
\node at (-190:1.3) [label={below:\footnotesize{$2$}}]{};
\node at (-198:0.75) [label={below:\footnotesize{$3$}}]{};
\node at (-16:0.75) [label={above:\footnotesize{$4$}}]{};
\node at (0:1.1) [label={below:\footnotesize{$5$}}]{};
\node at (12:.9) [label={right:\footnotesize{$6$}}]{};
\end{tikzpicture}
\caption{With counterclockwise as the positive orientation we have $\alpha(4)=3$ and $\sigma(3) = 1$, so $(\alpha\circ\sigma)(4) = 1$.  Then $(\alpha\circ\sigma)^{-1}(1)=4$, and the edge with label 4 is the
edge following the edge with label 1 in counterclockwise order about their shared face.}\label{fig:vert-label}
\end{center}\end{figure}

It follows from Theorems 1 and 2 of \cite{edmonds} there is a one-to-one correspondence between homeomorphism classes of connected ribbon graphs and isomorphism classes of combinatorial maps without 
degree 2 vertices (except when the underlying graph is the circle graph). From now on we'll freely refer to elements of $\mathcal{R}_g$ as combinatorial maps $(\sigma,\alpha,\varphi)$ even though each 
element actually corresponds to an isomorphism class of combinatorial map.  Table \ref{tab:gloss1} gives a glossary for translating between some values associated to a ribbon graph and the corresponding 
combinatorial map.

\begin{table}\begin{center}\begin{tabular}{|c|c|}
Ribbon Graph Property & Combinatorial Map Property\\
\hline
Number of Vertices & $c(\sigma)$\\
\hline
Number of Edges & $c(\alpha)$\\
\hline
Number of Faces & $c(\varphi)$\end{tabular}\caption{Glossary for translating between ribbon graphs and combinatorial maps.  In general some care must be taken with this because of the `topological 
invisibility' of degree 2 vertices, but in all the cases of interest to us degree 2 vertices have been forbidden.}\label{tab:gloss1}\end{center}\end{table}

There is one more attribute of combinatorial and topological maps which we must introduce, the dual.  This is generalization of the dual of a
graph embedded in the plane (or sphere) to a graph cellularly embedded in an arbitrary surface.  As we'll see, considering the duals of elements of $\mathcal{R}_g$ will allow us to shrink the search 
space of candidates for elements of $\mathcal{R}_g$.

\begin{defn}\label{def:map-dual}
The geometric dual of a topological map $M$ is the map $M^{*}$ constructed as follows:  We place a single vertex of $M^{*}$ inside each face of $M$.  Then, for every edge $e$ in $M$, we place an edge in $M^{*}$
running between the vertices of $M^{*}$ corresponding to the faces of $M$ incident to $e$, with the new edge passing through the midpoint of $e$.
\end{defn}

\begin{corollary}\cite{lando} If $(\sigma,\alpha,\varphi)$ is a combinatorial map with $M$ the corresponding topological map, then $(\varphi,\alpha,\sigma)$ is the combinatorial map corresponding 
to $M^{*}$.
\end{corollary}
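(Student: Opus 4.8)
The plan is to run the four-step labelling procedure that assigns a combinatorial map to a cellular embedding (recalled just before Lemma~\ref{lem:map_genus}) directly on the dual map $M^{*}$ of Definition~\ref{def:map-dual}, reusing the dart set $\{1,\dots,2n\}$ of $M$, and to read off that the resulting triple is $(\varphi,\alpha,\sigma)$. The geometric construction of $M^{*}$ supplies three bijections for free: its vertices are the faces of $M$, its edges are in one-to-one correspondence with the edges of $M$ (each dual edge crosses exactly one primal edge at its midpoint), and, since $M$ is a cellular embedding of a connected graph, its faces are the vertices of $M$. These bijections already force the vertex permutation of $M^{*}$ to have cycles indexed by the faces of $M$ (the cycles of $\varphi$), its edge involution to pair the same darts as $\alpha$, and its face permutation to have cycles indexed by the vertices of $M$ (the cycles of $\sigma$). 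The content of the corollary is that, with darts inherited from $M$, these permutations are precisely $\varphi$, $\alpha$, and $\sigma$.

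I would carry out the verification in the following order. First, because each edge of $M^{*}$ meets each edge of $M$ in a single crossing, labelling the two ends of a dual edge by the labels of the primal edge it crosses makes the dual edge involution coincide with $\alpha$; this disposes of condition~(i) of Definition~\ref{def:combinatorialmap}. Second, I would identify the dual rotation. Placing a dual vertex inside a face $f$ of $M$, the dual edges leaving it cross, in counterclockwise order, exactly the boundary edges of $f$ in the order in which the boundary walk of $f$ visits them; since that boundary walk is the $\varphi$-cycle attached to $f$ (this is the content of step~4 of the procedure, illustrated in Figure~\ref{fig:vert-label}), the dual rotation is $\varphi$. The defining relation $\varphi=(\sigma\alpha)^{-1}$ then acts as the consistency check that pins down the third permutation as the one whose cycles are the vertices of $M$. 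Finally, transitivity (condition~(ii)) is inherited rather than reproved: since $\varphi=(\sigma\alpha)^{-1}\in\langle\sigma,\alpha\rangle$ and $\sigma=\varphi^{-1}\alpha\in\langle\varphi,\alpha\rangle$, we have $\langle\varphi,\alpha\rangle=\langle\sigma,\alpha\rangle$, which already acts transitively on $\{1,\dots,2n\}$.

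I expect the genuine difficulty to lie entirely in the orientation bookkeeping of the rotation step, not in the combinatorics of the three bijections. The delicate point is to confirm that the counterclockwise rotation about the dual vertex traverses the boundary of $f$ in the same sense that $\varphi$ records, so that one lands on $\varphi$ rather than $\varphi^{-1}$; choosing the opposite orientation for $M^{*}$, or tracing the face boundary with the face on the wrong side, replaces the computed dual triple by its mirror $(\varphi^{-1},\alpha,\sigma^{-1})$, or by an $\alpha$-conjugate of it, each of which is an equally valid combinatorial map but not the literal triple asserted. The careful part of the argument is therefore to fix, once and for all, the conventions used in the four-step procedure (counterclockwise rotation, and the face lying on a fixed side of its boundary walk) and to check that the orientation $M^{*}$ inherits from $X$ agrees with them; once the conventions are aligned, the identities $\sigma^{*}=\varphi$, $\alpha^{*}=\alpha$, $\varphi^{*}=\sigma$ follow. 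Since the statement is only needed at the level of isomorphism classes (Definition~\ref{def:map-iso}), it would also suffice, as a fallback, to exhibit a relabelling $\rho$ conjugating the triple obtained from any fixed convention to $(\varphi,\alpha,\sigma)$.
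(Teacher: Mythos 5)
Your proposal is correct and takes essentially the same route as the paper: the paper's entire proof is the one-sentence observation that Definition~\ref{def:map-dual} exchanges vertices with faces while fixing edges, which is exactly the triple of bijections you extract before identifying the three permutations. You simply carry out the labelling and orientation bookkeeping that the paper leaves implicit, correctly isolating the counterclockwise-rotation convention as the only point where anything could go wrong.
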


\begin{proof} Definition \ref{def:map-dual} immediately implies that the faces of the dual of $M$ correspond to the vertices of $M$ and so forth.
\end{proof}

\begin{lemma}
Let $R$ be a ribbon graph with underlying graph $G$.  Then $R\in\mathcal{R}_g$ for some $g$ if and only if $R$ has a bipartite\footnote{We say a graph or graph embedding is \textit{bipartite} if the 
vertices can be partitioned into two color classes so that every edge is incident to a vertex in each color class.} dual.
\end{lemma}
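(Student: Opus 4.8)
The plan is to establish the equivalence by translating between two colorings: a proper 2-face-coloring of $R$ (which, by the restatement of Lemma \ref{lem:twocoloring} and Corollary \ref{cor:facecount}, is exactly the condition for $R \in \mathcal{R}_g$ for some $g$) and a bipartition of the dual map $R^*$. The key observation is that under the geometric dual, faces of $R$ correspond to vertices of $R^*$, and edges of $R$ correspond to edges of $R^*$ (each dual edge crosses the midpoint of the corresponding primal edge, joining the two faces of $R$ on either side of it). So a function assigning colors to faces of $R$ is literally the same data as a function assigning colors to vertices of $R^*$, and the adjacency relation is preserved.

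\emph{First} I would recall the dual correspondence at the combinatorial level. By Corollary \ref{cor:facecount}, the statement ``$R\in\mathcal{R}_g$ for some $g$'' is equivalent to ``$R$ can be properly 2-face-colored'' (the genus $g$ is then forced by the formula $F = 2+g-g_R$, so existence of some valid $g$ is exactly the colorability condition). Thus it suffices to prove that $R$ admits a proper 2-face-coloring if and only if $R^*$ is bipartite. \emph{Next} I would make the dictionary precise: writing $R$ as the combinatorial map $(\sigma,\alpha,\varphi)$, its dual is $(\varphi,\alpha,\sigma)$ by the corollary following Definition \ref{def:map-dual}, so the faces of $R$ (cycles of $\varphi$) become the vertices of $R^*$ (cycles of the first permutation of the dual), while the edges are shared (both use $\alpha$).

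\emph{Then} I would verify the two directions of the equivalence, which become essentially tautological once the dictionary is in place. For the forward direction, suppose $R$ has a proper 2-face-coloring $c$ with colors $\{1,2\}$. Transport $c$ to the vertices of $R^*$ via the face-vertex correspondence. An edge of $R^*$ joins two vertices exactly when the corresponding edge of $R$ separates two distinct faces; by the defining property of a proper face-coloring, those two faces receive different colors, so the two endpoints of every edge of $R^*$ get different colors. Hence $c$ is a proper vertex 2-coloring of $R^*$, i.e., $R^*$ is bipartite. The converse runs identically in reverse: a bipartition of the vertices of $R^*$ pulls back along the same correspondence to a 2-coloring of the faces of $R$ in which no edge has both sides the same color. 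The one point requiring a little care — and \textbf{the main obstacle} — is handling edges of $R$ both of whose sides lie on the \emph{same} face (i.e., where $\alpha$ and $\varphi$ interact so a single face touches an edge twice, producing a loop in $R^*$). Such an edge yields a self-loop at a single vertex of the dual, which both obstructs a proper 2-face-coloring of $R$ (one face would be forced to differ from itself) and obstructs bipartiteness of $R^*$ (a loop is an odd cycle); so the two obstructions coincide, and the equivalence survives. I would spell out this loop/self-adjacency case explicitly to confirm that ``bipartite'' is interpreted so that any loop edge disqualifies the graph, matching the face-coloring constraint exactly.
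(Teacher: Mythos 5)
Your proposal is correct and follows essentially the same route as the paper: reduce ``$R\in\mathcal{R}_g$ for some $g$'' to proper 2-face-colorability via Corollary \ref{cor:facecount}, then observe that duality exchanges faces with vertices so that a proper 2-face-coloring of $R$ is exactly a proper 2-vertex-coloring (bipartition) of $R^{*}$. Your explicit treatment of edges whose two sides meet the same face (loops in the dual) is a welcome extra detail the paper leaves implicit, but it does not change the argument.
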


\begin{proof}
Our restatement of Lemma \ref{lem:twocoloring} in terms of face coloring tells us that we must be able to properly 2-face-color $R$ if it is an element of any $\mathcal{R}_g$, and then Corollary 
\ref{cor:facecount} tells precisely the value of $g$ such that $R\in\mathcal{R}_g$, in terms of $g_R$ and the number of faces of $R$.  Thus $R$ is in some $\mathcal{R}_g$ if and only if it can 
be properly 2-face-colored.  Since taking the dual of $R$ exchanges vertices with faces we conclude that $R \in \mathcal{R}_g$ for some $g$ if and only if it's dual is bipartite.
\end{proof}

We can now restate Corollary \ref{cor:facecount} in terms of dual combinatorial maps:
\begin{corollary}\label{cor:map-sep-genus}
Let $R$ be a ribbon graph with corresponding combinatorial map $(\sigma,\alpha,\varphi)$ and let $g>0$.  Then $R\in \mathcal{R}_g$ if and only if $(\varphi,\alpha,\sigma)$ is bipartite, $c(\sigma)$ has 
no 2-cycles, and 
\[g=\frac{-c(\sigma)+c(\alpha)+c(\varphi)}{2} -1.\]
\end{corollary}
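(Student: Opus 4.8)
The plan is to read the statement as a direct translation of Corollary~\ref{cor:facecount} and the preceding bipartite-dual lemma into the language of combinatorial maps, using the glossary of Table~\ref{tab:gloss1} (where $c(\sigma)$, $c(\alpha)$, $c(\varphi)$ count the vertices, edges, and faces respectively) together with the Euler identity of Lemma~\ref{lem:map_genus}. There are three conditions to match up: bipartiteness of the dual map $(\varphi,\alpha,\sigma)$, the absence of $2$-cycles in $\sigma$, and the genus formula. I would treat each in turn, noting that the first is exactly proper $2$-face-colorability of $R$, the second is the canonical-representative convention fixed in Section~4, and the third is pure bookkeeping with the two cited identities.

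For the forward direction, suppose $R\in\mathcal{R}_g$. The preceding lemma immediately gives that the dual map $(\varphi,\alpha,\sigma)$ is bipartite. Since $g>0$, $R$ is not the annulus (the ribbon graph of the circle, whose canonical map is $\sigma=\alpha=(1\,2)$ and which lies in $\mathcal{R}_0$), so the combinatorial map corresponding to $R$ is the one with $\sigma$ having no $2$-cycles, per our convention forbidding degree $2$ vertices. For the genus, Corollary~\ref{cor:facecount} gives $F=2+g-g_R$, i.e.\ $g=F-2+g_R$; substituting $F=c(\varphi)$ and, from Lemma~\ref{lem:map_genus}, $g_R=1-\tfrac{1}{2}\big(c(\sigma)-c(\alpha)+c(\varphi)\big)$ yields
\[
g \;=\; c(\varphi) - 1 - \tfrac{1}{2}\big(c(\sigma)-c(\alpha)+c(\varphi)\big) \;=\; \frac{-c(\sigma)+c(\alpha)+c(\varphi)}{2} - 1,
\]
which is the claimed identity.

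For the converse, assume the three stated conditions hold. Bipartiteness of $(\varphi,\alpha,\sigma)$ means $R$ admits a proper $2$-face-coloring, so by the preceding lemma $R\in\mathcal{R}_{g'}$ for some $g'$. Applying Corollary~\ref{cor:facecount} to $R$ and running the substitution above computes $g'$ to be exactly $\tfrac{1}{2}\big(-c(\sigma)+c(\alpha)+c(\varphi)\big)-1$, which by hypothesis equals $g$; hence $g'=g$ and $R\in\mathcal{R}_g$.

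I expect no serious obstacle here, since the content is genuinely a repackaging of earlier results and the only care needed is bookkeeping. The two points to watch are (i) that Lemma~\ref{lem:map_genus} is applied to the canonical map so the $c(\cdot)$ counts agree with the $V,E,F$ appearing in Corollary~\ref{cor:facecount} — this is what the no-$2$-cycle hypothesis guarantees — and (ii) the degenerate circle/annulus case, which is exactly why the hypothesis $g>0$ is imposed and why a $2$-cycle in $\sigma$ must be excluded. Both are handled by the conventions already established in Section~4.
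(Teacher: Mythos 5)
Your proposal is correct and follows essentially the same route as the paper's own proof: invoking the preceding bipartite-dual lemma for the coloring condition, using $g>0$ to rule out the annulus and hence degree~2 vertices, and deriving the genus formula by substituting $F=2+g-g_R$ from Corollary~\ref{cor:facecount} into the Euler relation of Lemma~\ref{lem:map_genus} via the glossary of Table~\ref{tab:gloss1}. You spell out the two directions slightly more explicitly than the paper does, but the content is identical.
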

The condition that $R$ has a bipartite dual isn't immediately any more useful than the condition that $R$ be properly 2-face-colorable, but we will see it's utility later.

\begin{proof}
The elements of $\mathcal{R}_g$ are all connected ribbon graphs, so we know that $R$ does in fact have an associated combinatorial map.  Since we assume $g>0$, we know that $R$ is not the annulus and thus 
we can assume it has no degree 2 vertices, hence $c(\sigma)$ has no 2-cycles.  The condition that $(\varphi,\alpha,\sigma)$ is bipartite comes from the equivalence between being able to properly 2-face-color
 a combinatorial map and its dual being bipartite.  Finally, we obtain the equality for $g$ by substituting $F= 2+g-g_R$ from Corollary \ref{cor:facecount} into the the formula for $g_R$ from 
Definition \ref{def:ribbon}, simplifying and `translating' via Table \ref{tab:gloss1}.
\end{proof}

Now we have precise conditions on whether a combinatorial map $(\sigma,\alpha,\varphi)$ corresponds to an element of $\mathcal{R}_g$.  If we knew the possible numbers of edges for an element of 
$\mathcal{R}_g$, we could determine which symmetric groups to search (recall that $\sigma,\alpha,\varphi$ are all in $S_{2n}$ where $n$ is the number of edges of the underlying graph), and enumerate 
(albeit quite inefficiently) the elements of $\mathcal{R}_g$.  Now we'll determine the possible numbers of edges for an element of $\mathcal{R}_g$ before using the bipartiteness of the duals to 
more efficiently find the elements of $\mathcal{R}_g$.

\begin{lemma}\label{lem:edge-bound}
If $(\sigma,\alpha,\varphi) \in \mathcal{R}_g$ then for all $g>0$.
\[1+g\le c(\alpha) \le 4g\]
\end{lemma}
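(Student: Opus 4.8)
The plan is to translate both bounds into a single linear identity among the cycle counts $c(\sigma)$, $c(\alpha)$, $c(\varphi)$ and the genera $g$ and $g_R$, and then to control the number of vertices by a degree argument for the underlying graph. Writing $V=c(\sigma)$, $E=c(\alpha)$, $F=c(\varphi)$ for the numbers of vertices, edges, and faces of the ribbon graph $R$ corresponding to $(\sigma,\alpha,\varphi)$, I would first combine the Euler relation $V-E+F=2-2g_R$ from Definition \ref{def:ribbon} with the face count $F=2+g-g_R$ from Corollary \ref{cor:facecount}. Eliminating $F$ gives the clean identity
\[E = V + g + g_R,\]
which drives both inequalities. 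From it the lower bound is essentially immediate: since every ribbon graph has at least one vertex ($V\ge 1$) and nonnegative genus ($g_R\ge 0$), we get $E\ge 1+g$ at once.

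For the upper bound I would bound $V$ from above in terms of $E$ via a minimum-degree argument, and this is where the real content lies. A naive bound only rules out degrees $1$ and $2$: degree-$2$ vertices are excluded by our convention once $g>0$ (so $R$ is not the annulus), and a degree-$1$ vertex is impossible because the two sides of its incident edge would meet at the free endpoint, forcing the same face on both sides and violating the $2$-face-coloring of Lemma \ref{lem:twocoloring}. That yields only minimum degree $3$, hence $V\le\tfrac{2}{3}E$ and the too-weak estimate $E\le 6g$. The key improvement is to observe that every vertex of $R$ in fact has \emph{even} degree. Indeed, by Corollary \ref{cor:map-sep-genus} the dual map $(\varphi,\alpha,\sigma)$ is bipartite, and the faces of the dual are exactly the vertices of $R$; since the boundary walk of any face of a bipartite map is a closed walk in a bipartite graph, it has even length, so each vertex-degree of $R$ is even. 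Combined with the exclusion of degrees $1$ and $2$, every vertex has degree at least $4$.

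With minimum degree $4$, the handshake relation gives $2E\ge 4V$, i.e.\ $V\le\tfrac{1}{2}E$. Substituting into $E=V+g+g_R$ yields $\tfrac{1}{2}E\le g+g_R$, and since $F=2+g-g_R\ge 2$ forces $g_R\le g$, we conclude $\tfrac{1}{2}E\le 2g$, that is $E\le 4g$. I expect the only genuinely delicate step to be the evenness claim, namely correctly identifying that the bipartiteness of the dual is precisely what promotes the minimum degree from $3$ to $4$; once that is in hand, both inequalities fall out of the identity $E=V+g+g_R$ by elementary substitution.
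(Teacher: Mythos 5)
Your proof is correct and follows essentially the same route as the paper: both arguments hinge on the identity $c(\alpha)=c(\sigma)+g+g_R$ (yielding the lower bound from $c(\sigma)\ge 1$, $g_R\ge 0$) and on the observation that the $2$-face-coloring forces every vertex to have even degree, hence degree at least $4$ and $c(\sigma)\le c(\alpha)/2$ for the upper bound. Your use of $g_R\le g$ is just the paper's $c(\varphi)\ge 2$ in disguise, since $c(\varphi)=2+g-g_R$.
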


\begin{proof}
Recall that for $g>0$, we can assume there are no degree 2 vertices in an element of $\mathcal{R}_g$.  Additionally, since each edge is incident to both colors of faces in the surface, 
the degree of each vertex must be even.
Thus every vertex has degree at least 4 and we get $V\le \frac{E}{2}$ or $c(\sigma)\leq \frac{c(\alpha)}{2}$. By separating
there must at least be two faces, $c(\varphi) \geq 2$. So substituting into Corollary \ref{cor:map-sep-genus}
gives the right inequality.

From Corollary \ref{cor:map-sep-genus} we obtain that
\begin{equation*}
c(\alpha)=2g-c(\varphi)+c(\sigma)+2 \,.
\end{equation*}
By Corollary \ref{cor:facecount}, $c(\varphi) = 2+ g-g_R$ (since $c(\varphi)$ is the number of faces of $R$). Substituting this for $c(\varphi)$ gives 
\begin{equation*}
c(\alpha)= g+g_R+c(\sigma) \,.
\end{equation*}
Since $g_R \geq 0$ and $c(\sigma)\geq 1$, the left inequality follows.
\end{proof}

Now we introduce a generalization of combinatorial maps which will allow us to more quickly enumerate elements of $\mathcal{R}_g$ using a computer.

\begin{defn}\label{def:hypermap}
A hypermap is a triple of permutations $(\sigma,\alpha,\varphi)$ in $S_{n}$ such that\\
i) $\langle \sigma,\alpha,\varphi\rangle$ acts transitively on $\{1,\ldots, n\}$\\
ii) $\varphi\circ\alpha\circ\sigma = \;\text{Id}$.
\end{defn}
Note that this definition is effectively a relaxation of Definition \ref{def:combinatorialmap}, since it no longer requires that one of the permutations be a fixed-point-free involution.  Hypermaps 
generalize combinatorial maps analogously to how hypergraphs generalize graphs, so it's conventional to refer to the cycles of $\sigma$ as hypervertices, the cycles of $\alpha$ as hyperedges, the cycles 
of $\varphi$ as hyperfaces, and the labels $1,2,\ldots, n$ as brins (or as darts, bits, or blades).  

Isomorphisms of hypermaps are defined analogously to isomorphisms for combinatorial maps:

\begin{defn}\label{def:hypermap-iso}An isomorphism between two hypermaps $(\sigma_1,\alpha_1,\varphi_1)$ and $(\sigma_2,\alpha_2,\varphi_2)$
in $S_n$, is a permuation $\gamma \in S_n$ with
\[\gamma^{-1}\sigma_1\gamma = \sigma_2,\quad\gamma^{-1}\alpha_1\gamma = \alpha_2,\quad\gamma^{-1}\varphi_1\gamma = \varphi_2.\]
\end{defn}
Note that as an immediate consequence, any automorphism $\gamma$ of a hypermap $(\sigma,\alpha,\varphi)$ must commute with each of $\sigma,\alpha,$ and $\varphi$.

The following theorem due to Walsh gives the connection between hypermaps and the elements of $\mathcal{R}_g$ (language is slightly modified since what Walsh calls a hypermap we call an 
isomorphism class of hypermap).
\begin{theorem}\cite{walsh}
There is a one-to-one correspondence isomorphism classes of hypermaps and isomorphism class of 2-vertex-colored bipartite combinatorial maps.  This correspondence maps hypervertices to vertices of one 
color class, hyperedges to vertices of the other color class, hyperfaces to faces, and brins to edges.
\end{theorem}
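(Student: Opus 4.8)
The plan is to exhibit an explicit construction (the Walsh bijection) turning a hypermap into a $2$-vertex-colored bipartite combinatorial map, to pair it with an equally explicit reverse construction, and finally to check that both constructions respect the two notions of isomorphism so that they descend to the claimed bijection of isomorphism classes.

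First I would set up the forward map. Given a hypermap $(\sigma,\alpha,\varphi)$ in $S_n$, I introduce $2n$ labels, the set $W=\{1,\dots,n\}$ of ``white'' ends together with a disjoint copy $\{1^{*},\dots,n^{*}\}$ of ``black'' ends, regarded as the edge-ends of a new map in which every brin $i$ becomes an edge with white end $i$ and black end $i^{*}$. Define $\alpha'$ to be the involution $i\leftrightarrow i^{*}$, which is fixed-point-free and so supplies condition (i) of Definition \ref{def:combinatorialmap}; define $\sigma'$ to act as $\sigma$ on $W$ and as the transported copy of $\alpha$ on the black ends, i.e. $\sigma'(i)=\sigma(i)$ and $\sigma'(i^{*})=(\alpha(i))^{*}$; and set $\varphi'=(\alpha'\sigma')^{-1}$, so that $\varphi'\alpha'\sigma'=1$. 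By construction the cycles of $\sigma'$ on $W$ are the hypervertices and those on the black ends are the hyperedges, so the vertex set is canonically split into two color classes with each edge $\{i,i^{*}\}$ meeting one of each; the map is therefore bipartite and carries a distinguished $2$-vertex-coloring, and the edges are exactly the brins. Transitivity of $\langle\sigma',\alpha'\rangle$ follows from transitivity of $\langle\sigma,\alpha,\varphi\rangle=\langle\sigma,\alpha\rangle$ together with the fact that $\alpha'$ passes freely between $W$ and the black ends. A short computation gives $(\alpha'\sigma')^{2}|_{W}=\alpha\sigma=\varphi^{-1}$, so each length-$\ell$ cycle of $\varphi^{-1}$ lifts to a single length-$2\ell$ cycle of $\alpha'\sigma'$; hence $c(\varphi')=c(\varphi)$ and the faces of the map correspond to the hyperfaces. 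This establishes all four correspondences in the statement.

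Conversely, given a $2$-vertex-colored bipartite combinatorial map $(\sigma',\alpha',\varphi')$ in $S_{2n}$, bipartiteness together with the coloring guarantees that every edge has exactly one white end and one black end, so I can index the $n$ edges by brins and read off $\sigma$ as the rotation at the white vertices and $\alpha$ as the rotation at the black vertices (each viewed as a permutation of the $n$ edges), then set $\varphi=(\alpha\sigma)^{-1}$. One checks directly that $\langle\sigma,\alpha\rangle$ inherits transitivity and that $\varphi\alpha\sigma=1$, so $(\sigma,\alpha,\varphi)$ is a hypermap in the sense of Definition \ref{def:hypermap}; comparing the two recipes label-by-label shows they are mutually inverse on the level of actual triples.

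It remains to pass to isomorphism classes, and this is where the main work lies. I would show that a hypermap isomorphism $\gamma\in S_n$ (Definition \ref{def:hypermap-iso}) induces the relabeling $\gamma'\in S_{2n}$ acting as $\gamma$ on $W$ and as its transported copy on the black ends, and that this $\gamma'$ realizes an isomorphism of the resulting colored maps in the sense of Definition \ref{def:map-iso} while, crucially, preserving the white/black coloring. In the other direction, an isomorphism of $2$-vertex-colored bipartite maps is by definition color-preserving, hence carries white vertices to white vertices and black to black and descends to a permutation of the edges that conjugates the associated hypermaps. The delicate point, and the one I expect to be the main obstacle, is precisely the role of the coloring: an uncolored bipartite map generally admits a color-swapping symmetry, which on the hypermap side interchanges the roles of $\sigma$ and $\alpha$, so fixing the $2$-vertex-coloring is exactly what rigidifies this ambiguity and makes the two assignments descend to well-defined, mutually inverse maps on isomorphism classes. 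Verifying that color-preservation is both necessary and sufficient for the two notions of isomorphism to match up, and that the passage $S_n\to S_{2n}$ neither creates nor destroys automorphisms, is the crux of the argument.
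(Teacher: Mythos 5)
Your construction is correct and is essentially the same correspondence the paper sketches: the paper only cites Walsh and illustrates the bijection (white rotations give $\overline{\sigma}$, black rotations give $\overline{\alpha}$, $\overline{\varphi}=(\overline{\alpha}\circ\overline{\sigma})^{-1}$, with face cycles halving in length), and your doubling of brins to edge-ends, the computation $(\alpha'\sigma')^{2}|_{W}=\alpha\sigma$, and the reduction of isomorphisms to color-preserving relabelings just make that sketch precise. Your emphasis on fixing the $2$-vertex-coloring to kill the color-swap ambiguity is exactly the point the paper itself relies on later when it notes that most elements of $\mathcal{R}_g$ correspond to two distinct hypermaps.
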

We'll briefly illustrate this correspondence before moving forward, but the best illustration is a picture (see Figure \ref{fig:hypermap-ex}).  Given a 2-vertex-colored bipartite combinatorial map $(\sigma,\alpha,\varphi)$ with $n$ edges (so $\sigma,\alpha,\varphi \in S_{2n}$) 
we'll consider the cycles of $\sigma$ which correspond to black verices separately from the ones corresponding to white vertices.  Each edge of a bipartite graph is incident to exactly one vertex in 
each color class, so the cycles of $\sigma$ which correspond to black vertices induce a permutation on the edges of the graph (with one cycle per black vertex) and so the cycles of $\sigma$ corresponding to 
the white vertices (with the induced permutation having one cycle per white vertex).  Labelling the edges $1,2,\ldots, n$ we now have elements $\overline{\sigma}$ and $\overline{\alpha}$ in $S_n$ induced 
by the action of $\sigma$ restriced (respectively) to the black and white vertices.  For now we simply define $\overline{\varphi} := (\alpha\circ \sigma)^{-1}$, but from \cite{walsh} and \cite{lando} 
$\overline{\varphi}$ has the same number of cycles as $\varphi$, with each cycle of $\overline{\varphi}$ having exactly half the length of the corresponding cycle in $\varphi$.  

\begin{defn}
The \textit{genus} of a hypermap $(\sigma,\alpha,\varphi)$ is defined to be the genus $g_R$ of the ribbon graph $R$ underyling its corresponding 2-colored bipartite combinatorial map.  It satisfies
\[2-2g_R = c(\sigma)+c(\alpha)-n + c(\varphi).\]
\end{defn}

\begin{figure}
\begin{center}
\begin{tikzpicture}
\fill (0,3) circle [radius=8pt];
\draw (-3,0) circle [radius =8pt]; 
\draw (0,0) circle [radius=8pt];
\draw (3,0) circle [radius=8pt];
\draw (-.25,3) -- (-2.8,.2);
\draw (.25,3) -- (2.8,.2);
\draw (0,3) arc (140:212:2.4);
\draw (0,3) arc (40:-32.2:2.4);
\node at (-0.12,3.0) [label={left:\footnotesize{$1$}}]{};
\node at (-0.08, 2.5) [label={left:\footnotesize{$2$}}]{};
\node at (0.53, 2.5) [label={left:\footnotesize{$3$}}]{};
\node at (0.94, 2.5) [label={left:\footnotesize{$4$}}]{};
\node at (-2.56,-0.14) [label={above:\footnotesize{$5$}}]{};
\node at (-.15, 0.05) [label={above: \footnotesize{$6$}}]{};
\node at (.5, 0.035) [label={above:\footnotesize{$7$}}]{};
\node at (2.85, 0.06) [label={above:\footnotesize{$8$}}]{};
\end{tikzpicture}
\quad
\begin{tikzpicture}
\fill (0,3) circle [radius=8pt];
\draw (-3,0) circle [radius =8pt]; 
\draw (0,0) circle [radius=8pt];
\draw (3,0) circle [radius=8pt];
\draw (-.25,3) -- (-2.8,.2);
\draw (.25,3) -- (2.8,.2);
\draw (0,3) arc (140:212:2.4);
\draw (0,3) arc (40:-32.2:2.4);
\node at (146:2.2) [label={above:\footnotesize{$1$}}]{};
\node at (100:1.7) [label={left:\footnotesize{$2$}}]{};
\node at (64:1.85) [label={left:\footnotesize{$3$}}]{};
\node at (45:2.3) [label={left:\footnotesize{$4$}}]{};
\end{tikzpicture}
\caption{On the left is a drawing of the combinatorial map $\sigma= (1,2,3,4)(6,7), \alpha = (1,5)(2,6)(3,7)(4,8), \varphi = (1,8,4,7,2,5)(3,6)$, and on the right is the corresponding
hypermap with $\sigma =(1,2,3,4), \alpha = (2,3), \varphi = (1,4,2)$.  Each drawing can be viewed as an embedding in the sphere.}\label{fig:hypermap-ex}
\end{center}\end{figure}
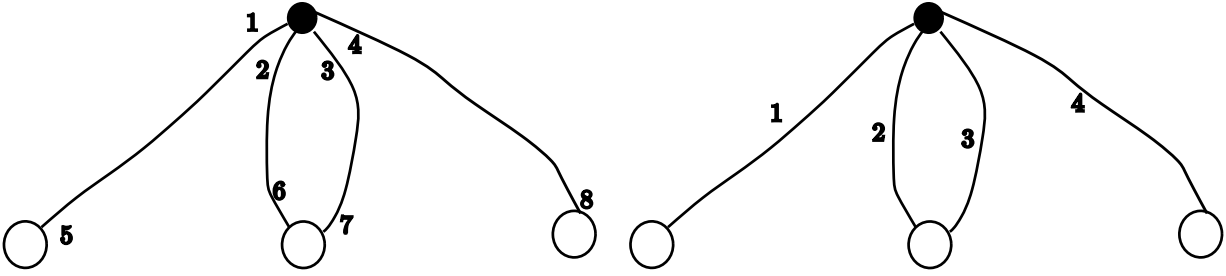

We know that each element of $\mathcal{R}_g$ has a bipartite dual map, and this is where we relate our problem to hypermap enumeration.  Although there is a one-to-one correspondence between 
isomorphism class of hypermap and isomorphism classes of 2-vertex-colored bipartite combinatorial map, there are twice as many labels in a combinatorial map as in the corresponding hypermap, which 
results in vastly more possible labellings.  This in turn translates to a much larger search space for our computer program.  Table \ref{tab:gloss2} extends our previous glossary between ribbon graphs and 
combinatorial maps to one between properly 2-face-colored ribbon graphs, their corresponding 2-face-colored combinatorial map, and the hypermap corresponding to their dual.

\begin{table}
\begin{center}
\begin{tabular}{|c|c|c|}
Ribbon Graph & Combinatorial Map& Hypermap of Dual Map\\
\hline
Number of Vertices & $c(\sigma)$ & $c(\overline{\varphi})$\\
\hline
Number of Edges & $c(\alpha)$ & $n$\\
\hline
Number of Faces & $c(\varphi)$ & $c(\overline{\sigma})+c(\overline{\alpha})$\\
\end{tabular}\caption{A glossary for translating between properly 2-face-colored ribbon graphs, their corresponding combinatorial map $(\sigma,\alpha,\varphi)$, and the hypermap $(\overline{\sigma},
\overline{\alpha},\overline{\varphi})$ corresponding to the 2-vertex-colored dual of the combinatorial map.  Here $n$ is the number of brins.}\label{tab:gloss2}
\end{center}
\end{table}

Translating Corollary \ref{cor:map-sep-genus} into terms of hypermaps we have
\begin{theorem}\label{thm:hypermap-condition}
A combinatorial map $(\sigma,\alpha,\varphi)$ is an element of $\mathcal{R}_g$ if and only if its dual map is bipartite and (after choice of 2-vertex-coloring) corresponds to a hypermap 
$(\overline{\sigma},\overline{\alpha},\overline{\varphi})$ such that $\overline{\varphi}$ has no fixed points and satisfying
\[g = \frac{-c(\overline{\varphi})+n+ c(\overline{\sigma})+c(\overline{\alpha})}{2}-1\]
where $n$ is the number of edges of the combinatorial map.
\end{theorem}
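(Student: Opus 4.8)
The plan is to obtain this result directly by translating, condition by condition, the three hypotheses of Corollary \ref{cor:map-sep-genus} into the language of the dual map's hypermap. Recall that Corollary \ref{cor:map-sep-genus} states that (for $g>0$) a combinatorial map $(\sigma,\alpha,\varphi)$ lies in $\mathcal{R}_g$ exactly when its dual $(\varphi,\alpha,\sigma)$ is bipartite, $\sigma$ has no $2$-cycles, and $g=\frac{-c(\sigma)+c(\alpha)+c(\varphi)}{2}-1$. Since the geometric dual of $(\sigma,\alpha,\varphi)$ is precisely $(\varphi,\alpha,\sigma)$, the bipartiteness hypothesis appearing in the theorem (``its dual map is bipartite'') is literally the same condition as the first hypothesis of Corollary \ref{cor:map-sep-genus}, so nothing is needed there beyond this identification and the fixing of a $2$-vertex-coloring so that Walsh's correspondence produces a hypermap $(\overline\sigma,\overline\alpha,\overline\varphi)$.

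The genus formula is then a one-line substitution. Walsh's theorem and the resulting dictionary in Table \ref{tab:gloss2} give $c(\sigma)=c(\overline\varphi)$, $c(\alpha)=n$, and $c(\varphi)=c(\overline\sigma)+c(\overline\alpha)$, where $n$ is the number of brins of the hypermap, equal to the number of edges of the map. Inserting these three identities into $g=\frac{-c(\sigma)+c(\alpha)+c(\varphi)}{2}-1$ immediately yields
\[g=\frac{-c(\overline\varphi)+n+c(\overline\sigma)+c(\overline\alpha)}{2}-1,\]
which is exactly the displayed formula; since the substitution is reversible, the two genus conditions are equivalent.

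The step that requires genuine care, and which I expect to be the main obstacle, is showing that ``$\sigma$ has no $2$-cycles'' is equivalent to ``$\overline\varphi$ has no fixed points.'' I would use the cycle-length relationship recorded after the statement of Walsh's theorem: in passing from a bipartite combinatorial map to its hypermap, each cycle of the hyperface permutation $\overline\varphi$ has exactly half the length of the corresponding cycle of the face permutation. Applying this to the bipartite dual map $(\varphi,\alpha,\sigma)$, whose face permutation is $\sigma$, a vertex of the original map of degree $d$ (a $\sigma$-cycle of length $d$) corresponds to an $\overline\varphi$-cycle of length $d/2$. Because every vertex of such a map has even degree (as observed in the proof of Lemma \ref{lem:edge-bound}, and as also forced by the bipartiteness of the dual), all $\sigma$-cycles have even length, and so the only way to produce a length-$1$ cycle of $\overline\varphi$ is from a degree-$2$ vertex, i.e.\ a $2$-cycle of $\sigma$. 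Hence $\overline\varphi$ is fixed-point-free precisely when $\sigma$ has no $2$-cycles.

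Finally I would assemble the three equivalences. The conditions of the theorem (bipartite dual, fixed-point-free $\overline\varphi$, genus formula) hold simultaneously if and only if the three hypotheses of Corollary \ref{cor:map-sep-genus} hold, and since the passage between a $2$-vertex-colored bipartite map and its hypermap is a bijection on isomorphism classes, each equivalence runs in both directions. This establishes that $(\sigma,\alpha,\varphi)\in\mathcal{R}_g$ if and only if its dual gives a hypermap with fixed-point-free $\overline\varphi$ satisfying the stated genus identity.
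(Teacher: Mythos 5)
Your proposal is correct and takes essentially the same route as the paper: both deduce the theorem by translating the three conditions of Corollary \ref{cor:map-sep-genus} through Walsh's dictionary ($c(\sigma)=c(\overline{\varphi})$, $c(\alpha)=n$, $c(\varphi)=c(\overline{\sigma})+c(\overline{\alpha})$) and identifying fixed points of $\overline{\varphi}$ with degree-$2$ vertices. Your expansion of that last equivalence via the cycle-length halving and the evenness of vertex degrees is a welcome filling-in of a step the paper only asserts parenthetically.
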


\begin{proof}
From the construction of the hypermap corresponding to a 2-vertex-colored bipartite map, we have $c(\varphi) = c(\overline{\sigma})+c(\overline{\alpha})$ (total vertices $=$ black vertices $+$ 
white vertices), $c(\alpha) =n$ (these are both the number of edges), and $c(\sigma) = c(\overline{\varphi})$ (these are both the number of faces in the dual).  Similarly, $\overline{\varphi}$ has a
fixed point (a 1-cycle) if and only if $\sigma$ has a 2-cycle (a degree 2 vertex).  Then the proof follows immediately from Corollary \ref{cor:map-sep-genus}.
\end{proof}

\section{Computation and Results}
Here we introduce an algorithm to enumerate the elements of $\mathcal{R}_g$.  Then we use the results of the previous section to compute the sizes of $|\mathcal{C}_g|, |\mathcal{L}_g|,$ 
and $|\mathcal{M}_g|$.

At this point we've bounded the number of edges by $g+1\le E \le 4g$ so we could search all hypermaps in $S_{E}$ to find the ones which are dual to an element of $\mathcal{R}_g$ as follows:

For each $E$ with $g+1\le E\le 4g$, we construct list of triples $(S,A,F)$ of possible cycle types for a hypermap $(\sigma,\alpha,\varphi)$ satisfying 
Theorem \ref{thm:hypermap-condition}.  In the correspondence between 2-colored maps and hypermaps, $\sigma$ and $\alpha$ correspond to the two colors of the vertices. The actual choice of coloring doesn't 
matter to us, we always pick $S$ and $A$ so that $\alpha$ has no fewer cycles than $\sigma$.  In all the most computationally demanding cases, this results in there being more permutations with cycle 
type $S$ than with either $A$ or $F$.  The assignment of labels is free, so for each triple we fix a permutation of cycle type $S$ to be $\sigma$.  Since $\varphi\circ\alpha\circ\sigma= 1$, the 
choice of any two elements of $(\sigma,\alpha,\varphi)$ determines the third uniquely.  Then from $A$ and $F$, we select whichever cycle type corresponds to the smallest number of permutations.
Without loss of generality, assume that it is $A$.  The for each permutation $\alpha$ with cycle-type $A$, we consider the hypermap $(\sigma,\alpha,(\alpha\circ\sigma)^{-1})$ and check to see if it satisfies 
Theorem \ref{thm:hypermap-condition}.  The actual search through all permutations with a given cycle-type is done in parallel using an iterative procedure based on several efficient algorithms for 
generating permutations and combinations found in \cite{ruskey}.

We're only interested in isomorphism classes of hypermaps, so whenever we find a $\alpha$ with $(\sigma,\alpha, (\alpha\circ\sigma)^{-1})$ satisfying Theorem \ref{thm:hypermap-condition} we check if there 
is a $\rho \in S_n$ such that $\rho^{-1}\sigma\rho=\sigma$ and $\rho^{-1}\alpha\rho$ comes before $\alpha$ in lexicographic order and only store $(\sigma,\alpha, (\alpha\circ\sigma)^{-1})$ as the 
representative for an element of $\mathcal{R}_g$ if $\alpha$ comes first in lexicographic under order among all such conjugations.

This would generate a complete list of isomorphism classes of hypermaps corresponding to the elements of $\mathcal{R}_g$.  Unfortunately, for the purposes of finding
$|\mathcal{R}_g|$, we've overcounted, since Walsh's correspondence is between \textit{2-colored} bipartite maps and hypermaps, and the choice of how to assign the colors means that most elements
of $\mathcal{R}_g$ correspond to two distinct hypermaps.  To avoid extensive isomorphism testing to resolve this issue, we instead restrict our search to cases where the cycle type of $\sigma$ does not 
come after that of $\alpha$ in lexicographic order (when written as partitions).  When $\sigma$ and $\alpha$ have the same cycle type we choose the first representation in lexicographic order of the two 
hypermaps $(\sigma,\alpha,(\alpha\circ\sigma)^{-1})$ and $(\alpha,\sigma, (\sigma\circ\alpha)^{-1})$ as a canonical form for the union or their isomorphism classes and only store such a hypermap if it 
is already in canonical form.  With this convention the algorithm produces a list with a single representative for the dual of each element in $\mathcal{R}_g$.


From Lemma \ref{lem:rgcg}, we can similarly determine the elements of $\mathcal{C}_g$ if we know the elements of
$\mathcal{R}_{\hat{g}}$ for all $\hat{g}\le g$.  Once $\mathcal{R}_{\hat{g}}$ has been computed for all $\hat{g}\le g$, we can inductively find the elements of $\mathcal{C}_g$ by taking the underlying graphs of
each map in $\mathcal{R}_g$ and graph isomorphism testing them against the elements of $\mathcal{C}_{\hat{g}}$ for $\hat{g}\le g$. 

A computer program to compute the values of $\mathcal{R}_g$ and $\mathcal{C}_g$ using this method can be found at \href{https://github.com/caagaard/Minseps}{this GitHub link}.  The time and memory 
needs of such a program are quite large, and required preallocating memory to avoid overflows.  To do this we use the following formula due to Frobenius:
\begin{theorem}\cite{lorenz}\label{thm:frob} Let $G$ be a finite group and let $C_1,\ldots, C_k$ be arbitrary conjugacy classes in $G$.  Define:
\[N(C_1,\ldots, C_k) := \lvert \{(g_1,\ldots, g_k) \in C_1\times\ldots\times C_k : g_1g_2\ldots g_k = \text{Id}\}\rvert.\]
Then
\[N(C_1,\ldots, C_k) = \frac{|C_1| \ldots |C_k|}{|G|}\sum_{V_i \in \text{Irr}\;G}\frac{\chi_i(C_1)\ldots\chi_i(C_k)}{\chi_i(\text{Id})^{k-2}}\]
where $\text{Irr}\;G$ is the set of irreducible representations of $G$ and $\chi_i$ is the character corresponding to the representation $V_i$.
\end{theorem}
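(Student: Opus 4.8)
The plan is to work inside the group algebra $\mathbb{C}[G]$ and exploit the fact that the class sums lie in its center, so that by Schur's lemma they act as scalars on each irreducible representation. For each conjugacy class $C_j$ write the \emph{class sum} $\hat{C}_j := \sum_{g\in C_j} g \in \mathbb{C}[G]$. Expanding the product $\hat{C}_1\hat{C}_2\cdots\hat{C}_k$ and collecting terms, the coefficient of a group element $h$ is exactly the number of tuples $(g_1,\ldots,g_k)\in C_1\times\cdots\times C_k$ with $g_1g_2\cdots g_k = h$. In particular, the coefficient of $\mathrm{Id}$ in $\hat{C}_1\cdots\hat{C}_k$ equals $N(C_1,\ldots,C_k)$, so the whole problem reduces to extracting a single coefficient of this central element.

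First I would extract that coefficient via the regular representation. Since $g$ acts on $\mathbb{C}[G]$ by a permutation of the standard basis fixing no basis vector unless $g=\mathrm{Id}$, the regular-representation trace satisfies $\mathrm{tr}_{\mathrm{reg}}(g) = |G|$ if $g=\mathrm{Id}$ and $0$ otherwise. Hence for any $a=\sum_g a_g\, g$ the identity coefficient is $a_{\mathrm{Id}} = \frac{1}{|G|}\mathrm{tr}_{\mathrm{reg}}(a)$, giving $N(C_1,\ldots,C_k) = \frac{1}{|G|}\mathrm{tr}_{\mathrm{reg}}(\hat{C}_1\cdots\hat{C}_k)$.

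The key computational ingredient is the scalar by which $\hat{C}_j$ acts on an irreducible $V_i$. As $\hat{C}_j$ is central, Schur's lemma forces it to act as some scalar $\lambda_{ij}$; taking traces, $\lambda_{ij}\,\chi_i(\mathrm{Id}) = \mathrm{tr}_{V_i}(\hat{C}_j) = \sum_{g\in C_j}\chi_i(g) = |C_j|\,\chi_i(C_j)$, so $\lambda_{ij} = |C_j|\chi_i(C_j)/\chi_i(\mathrm{Id})$. Consequently the product $\hat{C}_1\cdots\hat{C}_k$ acts on $V_i$ as the scalar $\prod_j \lambda_{ij}$, whose trace on a single copy of $V_i$ is $\chi_i(\mathrm{Id})\prod_j \lambda_{ij}$. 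Decomposing the regular representation as $\bigoplus_i V_i^{\oplus \chi_i(\mathrm{Id})}$ and summing the contributions weighted by the multiplicities $\chi_i(\mathrm{Id})$ yields $\mathrm{tr}_{\mathrm{reg}}(\hat{C}_1\cdots\hat{C}_k) = \sum_i \chi_i(\mathrm{Id})^2 \prod_j \lambda_{ij}$. Substituting the value of $\lambda_{ij}$ pulls out the factor $\prod_j|C_j|$, and simplifying the powers of $\chi_i(\mathrm{Id})$ produces the stated formula after dividing by $|G|$.

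I would expect no serious obstacle here, as this is classical character theory; the step requiring the most care is the bookkeeping of the powers of $\chi_i(\mathrm{Id})$. Each of the $k$ factors $\lambda_{ij}$ contributes one power of $\chi_i(\mathrm{Id})^{-1}$ in the denominator, while the trace on one copy of $V_i$ contributes one power of $\chi_i(\mathrm{Id})$ and the multiplicity $\chi_i(\mathrm{Id})$ contributes another, so the net exponent left in the denominator is $k-2$. Verifying that these factors combine to give $\chi_i(\mathrm{Id})^{k-2}$ exactly (rather than $\chi_i(\mathrm{Id})^{k}$ or $\chi_i(\mathrm{Id})^{k-1}$) is the place where an off-by-one slip is easiest to make.
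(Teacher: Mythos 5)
Your proof is correct and complete: the identification of $N(C_1,\ldots,C_k)$ with the identity coefficient of the product of class sums, the extraction of that coefficient via the regular-representation trace, the Schur's-lemma computation of the scalars $\lambda_{ij}=|C_j|\chi_i(C_j)/\chi_i(\mathrm{Id})$, and the final exponent bookkeeping giving $\chi_i(\mathrm{Id})^{2-k}$ all check out. The paper itself gives no proof of this statement --- it is quoted from the cited reference as a classical result of Frobenius --- and your argument is exactly the standard character-theoretic derivation one finds there, so there is nothing to contrast.
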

Given a number of brins $n$ and a triple of cycle types $(S,A,F)$ for elements of $S_n$, this formula doesn't require that the counted triples generate transitive permutation groups (a requirement for a 
 hypermap in Definition \ref{def:hypermap}) or identify isomorphic hypermaps (different labellings of the same embedding) so it gives a 
significant overestimate.  In all the cases where memory is a concern, $S$ is the conjugacy class of $n$-cycles, each of which acts transitively on $\{1,\ldots, n\}$, so there actually isn't any 
overestimating related to the transitivity condition in practice.  Since we fix an element of the first conjugacy class, $S$, in our algorithm, we divide the formula for $N(S,A,F)$ by $|S|$, which is 
closer to the number of hypermaps that must be stored, but still doesn't identify identify isomorphic hypermaps with the same $\sigma$ (when the isomorphism $\rho$ commutes with $\sigma$).  
With enough memory this upper bound would be sufficient for preallocation, but using this upper bound would 
preallocate more memory than the highest memory node on our cluster has for some cases.  In all cases where memory usage was a concern $S$ is the conjugacy class of $n$-cycles in $S_n$, so we further 
divide the formula for $N(S,A,F)$ by $n$ (the number of elements in $S_n$ that commute with an arbitrary $n$-cycle).  Outside of the few cases where it is exact, this is just an estimate of the number of 
hypermaps that must be stored (it would be a slight underestimate if we needed a representative of each isomorphism class, but we equate isomorphism classes that differ only by changing the 2-coloring), 
but it turns out to be a close enough estimate that the arrays storing output never needed to be resized more than once, which could fit in our memory.   The GitHub repository 
linked above has suggestions on how to modify the code for users who cannot afford even this limited array resizing.   

With the values of the $|\mathcal{R}_g|$ and $|\mathcal{C}_g|$ computed, we apply Corollary \ref{cor:lgcount} to obtain the values of the $|\mathcal{L}_g|$.  Then we inductively compute the $|\mathcal{M}_g|$
 using $|\mathcal{M}_0| = 1$ from \cite{bernhard} and $|\mathcal{M}_g|=|\mathcal{M}_{g-1}|+|\mathcal{L}_g|$ from Definition \ref{def:graphsets}.

The results of running this algorithm for $g\le 5$ are shown in Table \ref{tab:results}
\begin{table}\begin{center}\begin{tabular}{c|c|c|c|c}
$g$&$\lvert\mathcal{R}_g\rvert$&$\lvert\mathcal{C}_g\rvert$&$\lvert\mathcal{L}_g\rvert$&$\lvert\mathcal{M}_g\rvert$\\
\hline
0 & 1 & 1 & 1 & 1\\
\hline
1 & 3 & 3 & 4 & 5\\
\hline
2 & 31& 17 &  21& 26 \\
\hline
3 & 1831 & 164 & 191 & 217 \\
\hline
4 & 462638 & 3096 &  3338 & 3555\\
\hline
5 & 243066565 & 111445 & 115438 & 118993
\end{tabular}\caption{Sizes of $\mathcal{R}_g,\mathcal{C}_g,\mathcal{L}_g,$ and $\mathcal{M}_g$ for $g\le 5$.}\label{tab:results}\end{center}\end{table}

\section{Concluding Remarks}
A search of the OEIS does not find any sequences matching any of the sequences $\mathcal{R}_g$, $\mathcal{C}_g, \mathcal{L}_g,$ or $\mathcal{M}_g$.  It thus appears unlikely that any currently
known formula will yield correct values for the cardinalities of these sets.

At this point, we do not see any way forward towards a method to directly determine the values of $|\mathcal{C}_g|,|\mathcal{L}_g|$, or $|\mathcal{M}_g|$ without the computationally intensive process of
directly computing $\mathcal{R}_g$ and then determining $|\mathcal{C}_g|$ through graph isomorphism testing. It would be interesting to attempt to find a recursive formula for the values of 
$|\mathcal{R}_g|$ using some of the techniques for enumerating isomorphism
classes of similar flavors of nonplanar hypermaps and combinatorial maps developed in \cite{mednykh_enumeration_2006} and \cite{mednykh_enumeration_2010}.  A rough idea of the method developed there is to 
use the fact that it's easier to counted rooted maps and hypermaps (maps/hypermaps with a distinguished edge) directly than to count unrooted ones, 
and the authors develop formulae to convert counts of rooted maps/hypermaps into counts of unrooted maps and hypermaps.  This is done by considering rooted objects which can be realized as quotients under 
an automorphism of the unrooted objects in question.  For formal definitions and more details on quotients of maps see \cite{jones} or \cite{liskovets} or the unpublished \cite{nedela}.

There is an important distinction between the sets considered in those papers and the $\mathcal{R}_g$ we consider.  The first is that the families of maps and hypermaps considered in those 
papers are closed under quotients by map automorphisms. An example of the set of 
hypermaps considered in \cite{mednykh_enumeration_2010} may help to clarify this.  The authors enumerate the number of hypermaps of genus $g$ with $n$ edges.  If $M$ is a hypermap of with genus $g$ with $n$ edges, and $f$ is an automorphism of $M$, then the quotient $M/f$ is a hypermap with genus at most $g$ and at most $n$ edges (strictly fewer unless $f$ is the
identity).  So, if ones proceeds inductively with respect to genus and number of edges, they will have already considered these maps.  As observed in \cite{liskovets} this property is extremely useful for
such enumeration problems.  Without this property one needs to determine precisely which rooted objects arise as quotients of the unrooted ones being counted and then enumerate those.
The difficulty with using those techniques to enumerates elements of $\mathcal{R}_g$ arises here. The quotient of a map in $\mathcal{R}_g$ by an automorphism is not generally another map in $\mathcal{R}_g$, or even a map which be embedded as a minimal separating set in any surface.  As an example, consider the map and its quotient shown in Figure \ref{fig:map-aut} (for details on quotients of maps see \cite{liskovets} or \cite{nedela}).

\begin{figure}
\begin{center}
\begin{tikzpicture}[scale=0.8]
\draw (0,0) circle [radius=3];
\draw (2,0) circle [radius=1];
\draw (-4,0) circle [radius=1];
\fill (-3,0) circle [radius=2pt];
\fill (3,0) circle [radius=2pt];
\node at (174:3.2) [label={above:\footnotesize{$1$}}]{};
\node at (177:2.7) [label={above:\footnotesize{$6$}}]{};
\node at (179:3.3) [label={below:\footnotesize{$2$}}]{};
\node at (185:3.05) [label={below:\footnotesize{$3$}}]{};
\node at (2:3.2) [label={above:\footnotesize{$5$}}]{};
\node at (8.5:2.7) [label={above:\footnotesize{$7$}}]{};
\node at (1:2.7) [label={below: \footnotesize{$8$}}]{};
\node at (-6:2.8) [label={below:\footnotesize{$4$}}]{};
\end{tikzpicture}
\quad
\begin{tikzpicture}[scale=0.8]
\draw (-1,0) circle [radius=1];
\draw (0,0) -- (3,3);
\draw (0,0) -- (3,-3);
\fill (0,0) circle[radius=2pt];
\node at (125:.5) [label={above:\footnotesize{$1$}}]{};
\node at (187:.45) [label={below:\footnotesize{$2$}}]{};
\node at (322:1.1) [label={left:\footnotesize{$3$}}]{};
\node at (22:1.3) [label={left:\footnotesize{$6$}}]{};
\end{tikzpicture}
\caption{The combinatorial map $\sigma = (1,2,3,6)(4,5,7,8), \alpha = (1,2)(3,4)(5,6)(7,8)$ (left) and its quotient (right) under the automorphism $\rho = (1,7)(2,8)(3,4)(5,6)$.  The unlabelled edge ends
 that occur are called \textit{singular edges} in \cite{liskovets} and these would be fixed points under the involution $\alpha$ describing the quotient, which means it violates our definition of 
 combinatorial map.  Weakening the condition from \ref{def:combinatorialmap} that $\alpha$ be a fixed-point free involution, to merely requiring $\alpha$ is an involution (this is the definition used in 
 \cite{mednykh_enumeration_2006}) makes the counting problem more tractable by allowing maps with singular edges but in our setting requires determining what maps with singular edges can arise as 
 quotients of minimal separating sets.}\label{fig:map-aut}
\end{center}\end{figure}
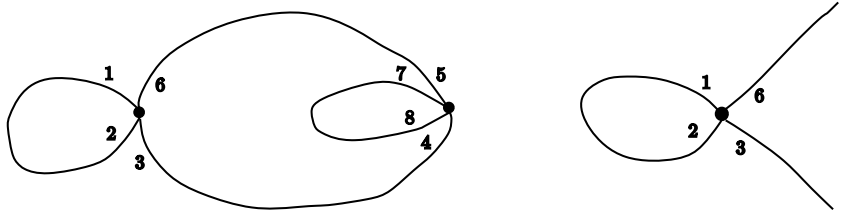

The map on the left is an element of $\mathcal{R}_2$, but the quotient map cannot be minimal separating in any genus.  Additionally, such a method would require a formula enumerating the
numer of rooted versions of the relevant combinatorial maps or hypermaps (which is as yet unknown).  To the best of our knowledge, in all existing work enumerating unrooted maps and hypermaps 
using the tools developed in \cite{mednykh_enumeration_2006}, \cite{mednykh_enumeration_2010}, and \cite{liskovets} the familes of objects being considered tend to either count all maps/hypermaps 
with a given genus and edge count or to heavily restrict to counting only regular maps/hypermaps (\cite{krasko3regular},\cite{krasko2019enumeration},\cite{krasko2019enumeration2}).  The case of 
all maps/hypermaps puts no restrictions on cycle types of the permutations, while a requiring a map to be $k$-regular for some $k$
fully determines the cycle types of $\sigma$ and $\alpha$.  In our case, there are some restrictions on cycle types, but nothing nearly as strong as regularity.  From some initial investigations we've done, 
considering the hypermaps associated to elements of $\mathcal{R}_g$ seems somewhat ammenable to the application of these methods, so they may form an interesting class of maps or hypermaps for 
further study of these methods.

\nocite{OSCAR}
\nocite{Julia-2017}
\bibliographystyle{plain}
\bibliography{classification_of_minseps}
\end{document}